\renewcommand{\showlabelsetlabel}[1]
\newtheorem{claim}{Claim}
\newtheorem{thm}{Theorem}
\newtheorem{rema}{Remark}
\newtheorem{lemma}{Lemma}
\newtheorem{defn}{Definition}
\def\P{\mathbb{P}}
\def\R{\mathbb{R}}
\def\Z{\mathbb{Z}}
\def\E{\mathbb{E}}
\def\F{\mathcal{F}}
\def\g{\gamma}
\def\a{\alpha}
\def\l{\lambda}
\def\b{\beta}
\def\eps{\varepsilon}
\def\dd{\,\mathrm{d}}
\newcommand{\card}[1]{\mathrm{card}\left(#1\right)}
\title{Transience of continuous-time conservative random walks}
\author{Satyaki Bhattacharya\footnote{Centre for Mathematical Sciences, Lund University, Box 118 SE-22100, Lund, Sweden.} \ and Stanislav Volkov${}^*$}
\begin{document}
\maketitle
\begin{abstract}
We consider two continuous-time generalizations of conservative random walks introduced in~\cite[Englander and Volkov (2022)]{EV}, an orthogonal and a spherically symmetrical one; the latter model is also known as {\em random flights}. For both models, we show the transience of the walks when $d\ge 2$ and the rate of changing of direction follows power law $t^{-\a}$, $0<\a\le 1$, or the law $(\ln t)^{-\beta}$ where $\beta>2$.
\end{abstract}

\noindent
{\bf AMS subject classification}: 60G50, 60J05, 60J75

\noindent
{\bf Keywords}: Random flight, non-time-homogeneous Markov chain, conservative random walk, transience, recurrence

\section{Introduction}
Conservative random walk (discrete time) was introduced in~\cite{EV} as time-nonhomogeneous Markov chain $X_n$, $n=1,2,\dots$,  defined as a process on $\Z^d$ such that for given (non-random) sequence $p_1,p_2,\dots$, where all $p_i\in(0,1)$, the walk at time $n$ with probability $p_n$  randomly picks one of the $2d$ directions parallel to the axis, and otherwise it continues moving in the direction it was going before. This walk can be viewed as generalization of Gillies random walk~\cite{Gillis} A special interesting case studied in~\cite{EV} was when $p_n\to 0$, and in particular when $p_n\sim n^{-\a}$ where $\a\in(0,1]$;
note that the case $\a>1$ is trivial as the walk would make only finally many turns. The question of recurrence vs.\ transience of this walk was one of the main questions of that paper.

Similar processes were known in the literature under different names. Some of the earliest papers which mention the continuous process with memory of this type were probably~\cite{Goldstein,Kac}. The term {\em persistent random was} used in~\cite{Peggy1,Peggy2}; in these papers some very general criteria of recurrence vs.\ transience were investigated. A planar motion with just three directions was studied in~\cite{Di}. A book on {\em Markov random flights} was recently published by Kolesnik~\cite{Kolesnik}. {\em Planar random motions with drifts} with four directions/speeds, switching at Poisson  times, were studied in~\cite{OR}. Applications of telegraph processes to option pricing can be found in~\cite{RAT}. Characteristic functions of correlated random walks were studied in~\cite{Chen}.

The main difference between the conservative random walk and most of the models studied in the literature (except, perhaps, \cite{VAS}, which has a more applied focus) is that the underlying process of switching the direction is {\em time-inhomogeneous}, thus creating various new phenomena. The recurrence/transience of discrete space conservative random walk on $\Z^1$ was thoroughly studied in~\cite{EV1} (please also see the references therein), and we believe that the continuous time version in one dimension will have very similar features. The establishment of recurrence in one dimension is more or less equivalent to finding that $\limsup$ of the process is $+\infty$ and $\liminf$ of the process is $-\infty$, while in higher dimensions the situation is much more intricate. Hence, we concentrate on the case when the dimension of the space is at least 2, except Theorem~\ref{th:A1} which deals with the embedded process.

\vskip 1cm

Below we introduce formally the two versions of a {\em continuous-time} conservative random walk on $\R^d$, $d\ge 1$.
\\[5mm]
\noindent
{\bf Model A (orthogonal model).} Let $\l(t)$ be a non-negative function such that 
\begin{align}\label{eq:conds}
\begin{split}
\Lambda(T)&=\int_0^T \l(t)\dd t<\infty    \quad \text{for all }T\ge 0;\\
\lim_{T\to \infty} \Lambda(T)&=\infty. 
\end{split}
\end{align}
Let $\tau_1<\tau_2<\dots$ be the consecutive points of a non-homogeneous Poisson point process (PPP for short) on $[0,\infty)$ with rate $\l(t)$, and $\tau_0=0$. Then conditions~\eqref{eq:conds} will guarantee that there will be finitely many $\tau_i$'s in every finite interval and that $\tau_n\to\infty$.  Let ${\bf f}_0,{\bf f}_1,{\bf f_2}$ be an i.i.d.\ sequence of vectors, each of which has a uniform distribution on the set of $2d$ unit vectors $\{\pm {\bf e}_1,\pm {\bf e}_2,\dots, \pm {\bf e}_d\}$ in $\R^d$.

The (orthogonal, continuous-time) conservative walk generated by the rate function $\l(\cdot)$ is a process $Z(t)$, $t\ge 0$, in $\R^d$, $d\ge 1$, such that $Z(0)=0$ and at each time $\tau_k$, $k\ge 0$, the walk starts moving in the direction ${\bf f}_k$, and keeps moving in this direction until time $\tau_{k+1}$, when it updates its direction. Formally, define
$$
N(t)=\sup\{k\ge 0:\ \tau_k\le t\},
$$
as the number of points of the PPP by time $t$, then
$$
Z(t)=\sum_{k=0}^{N(t)-1} \left(\tau_{k+1}-\tau_{k}\right){\bf f}_{k} +\left(t-\tau_{N(t)}\right){\bf f}_{N(t)}.
$$
We can also define the embedded process $W_n=Z(\tau_n)$ so that $W(0)=0$ and for $n\ge 1$
$$
W_n=\sum_{k=0}^{n-1} \left(\tau_{k+1}-\tau_{k}\right){\bf f}_{k}.
$$
The process $Z(t)$ can be viewed as a continuous equivalent of the conservative random walk introduced in~\cite{EV}.
\\[5mm]
\noindent
{\bf Model B (von Mises–Fisher model).} This model is defined similarly to the previous one, except that now random vectors ${\bf f}_k$, $k=1,2,\dots$, have a uniform distribution on a $d-$dimensional unit sphere $\mathcal{S}^{d-1}$, often called {\it von Mises–Fisher distribution}, instead of just on $2d$ unit vectors of $\R^d$.

Please note that this model is similar to the ``random flights'' model studied e.g.\ in~\cite{OG}, however, their results are only for a time-homogeneous Poisson process, unlike our case.
\vskip 5mm

The results that we obtain in the current paper are somewhat different for the two models, however, since they share a lot of common features, certain statements will hold for both of them. The main goal is establishing transience vs. recurrence of the walks, defined as follows.
\begin{defn}
Let $\rho\ge 0$. We say that the walk $Z(t)$ is $\rho-$recurrent, if there is an infinite sequence of times $t_1<t_2<\dots$ converging to infinity, such that $Z(t_i)\in [-\rho,\rho]^d$ for all $i=1,2,\dots$.

We say that the walk $Z(t)$ is transient, if it is not $\rho-$recurrent for any $\rho>0$, i.e.\ equivalently $\lim_{t\to\infty} \Vert Z(t)\Vert=\infty$.

Recurrence and transience of the embedded process $W_n$ are defined analogously, with the exception that instead of $t_1,t_2,t_3,\dots$ in the above definition, we have a strictly increasing sequence of positive {\em integers} $n_i$, $i=1,2,\dots$.
\end{defn}

\begin{rema} 
Note that {\em a priori} it is unclear if transience and recurrence are zero-one events, neither can we easily rule out a possibility of ``intermediate'' situations (e.g.\ $\rho-$recurrence  only for {\em some} $\rho$).
\end{rema}

Our main result, which shows transience for two type of rates, are presented in Theorems~\ref{th:A1}, \ref{thm2}, \ref{th:sat} and~\ref{thm:CV}.

\section{Preliminaries}
Throughout the paper, we will use the following notations. We write $X\sim Poi(\mu)$ when $X$ has a Poisson distribution with parameter $\mu>0$. For any set $A$, $|A|$ denotes the cardinality of the set $A$, and for $x\in\R^d$, $\Vert x\Vert$ denotes the usual Euclidean norm of $x$.

First, we state Kesten's generalization of the Kolmogorov-Rogozin inequality. Let $S_n=\xi_1+\dots+\xi_n$ where $\xi_i$ are independent, and for any random variable $Y$ define $Q(Y;a)=\sup_x \P(Y\in[x,x+a])$.

\begin{lemma}[\cite{KE}]\label{lem:kesten}
There exists $C>0$ such that for any real numbers $0<a_1,\dots,a_n\le 2L$ one has
$$
Q(S_n;L)\le\frac{C L\sum_{i=1}^n a_i^2(1-Q(\xi_i;a_i))Q(\xi_i;a_i)}{\left[\sum_{i=1}^n a_i^2 (1-Q(\xi_i;a_i))\right]^{3/2}}
$$
\end{lemma}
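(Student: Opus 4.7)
The plan is to follow the classical Fourier-analytic route. The starting point is Esseen's smoothing inequality: for any random variable $Y$ and any $L>0$,
\[
Q(Y;L) \le C_1 L \int_{-1/L}^{1/L} |\phi_Y(t)|\,\dd t,
\]
where $\phi_Y(t)=\E e^{itY}$ is the characteristic function. Applied to $Y=S_n$ and using independence of the $\xi_i$, this reduces the task to bounding the integral $L\int_{-1/L}^{1/L}\prod_i|\phi_{\xi_i}(t)|\,\dd t$.

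The second step is a pointwise bound on each $|\phi_{\xi_i}(t)|$ in terms of $Q_i:=Q(\xi_i;a_i)$. Using the symmetrization identity $|\phi_{\xi_i}(t)|^2=\E\cos(t(\xi_i-\xi_i'))$ with $\xi_i'$ an independent copy of $\xi_i$, splitting the expectation according to whether $|\xi_i-\xi_i'|\le a_i$ or not, using $\P(|\xi_i-\xi_i'|\le a_i)\le Q_i$, and combining with the elementary inequality $1-\cos(x)\ge c\min(x^2,1)$, one obtains an estimate of the form
\[
|\phi_{\xi_i}(t)|^2 \le 1 - c\,Q_i(1-Q_i)\min(a_i^2 t^2,1).
\]
The product $Q_i(1-Q_i)$ appears naturally from the interaction between the ``concentrated'' component (mass $\approx Q_i$) and the ``diffuse'' component (mass $\approx 1-Q_i$) of the symmetrized variable $\xi_i-\xi_i'$, as is already visible in the two-atom toy example $\P(\xi_i=0)=Q$, $\P(\xi_i=M)=1-Q$, which gives $1-|\phi|^2=2Q(1-Q)(1-\cos(tM))$.

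Substituting this into the product, applying $\prod(1-x_i)\le\exp(-\sum x_i)$, and integrating over $|t|\le 1/L$ (the constraint $a_i\le 2L$ ensures $\min(a_i^2 t^2,1)=a_i^2 t^2$ throughout the relevant range) yields a Gaussian-type integral of order $L/\sqrt{A}$ where $A:=\sum a_i^2 Q_i(1-Q_i)$. This is a Kolmogorov--Rogozin type bound, but it is strictly weaker than Kesten's inequality: setting $B:=\sum a_i^2(1-Q_i)$, one has $A\le B$ since $Q_i\le 1$, and the ratio of the two bounds is $(A/B)^{3/2}\le 1$.

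The main obstacle is bridging this gap to recover the $3/2$-power denominator. To do so, one needs a more delicate analysis---for instance, partitioning the indices according to the magnitude of $Q_i$, applying separate truncation scales of $t$ to each group, and recombining via H\"older's inequality---so that indices with small $Q_i$ (which contribute strongly to $B$ but weakly to $A$) are exploited to accelerate the decay of the characteristic function product on a wider range of $t$. This bookkeeping is the technical heart of Kesten's argument in~\cite{KE}.
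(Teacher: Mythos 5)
The paper does not prove this lemma; it cites it directly as Kesten's theorem from~\cite{KE}, which is the appropriate treatment for a known deep result. So there is no internal proof to compare against, and the relevant question is simply whether your argument establishes the stated inequality.

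It does not, and you say so yourself. Your sketch correctly sets up the standard Fourier machinery: Esseen's smoothing lemma, factorization of the characteristic function under independence, the symmetrization identity $|\phi_{\xi_i}(t)|^2=\E\cos\bigl(t(\xi_i-\xi_i')\bigr)$, and the lower bound on $1-|\phi_{\xi_i}(t)|^2$ of order $Q_i(1-Q_i)\min(a_i^2t^2,1)$ (up to a constant; note $\P(|\xi_i-\xi_i'|\le a_i)\le 2Q(\xi_i;a_i)$, not $Q(\xi_i;a_i)$, but that only affects $c$). Running this through $\prod(1-x_i)\le e^{-\sum x_i}$ and the Gaussian integral gives the Kolmogorov--Rogozin-type bound $Q(S_n;L)\lesssim L/\sqrt{A}$ with $A=\sum a_i^2Q_i(1-Q_i)$. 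But the statement to be proved is $Q(S_n;L)\lesssim LA/B^{3/2}$ with $B=\sum a_i^2(1-Q_i)\ge A$, which is strictly sharper whenever some $Q_i<1$ are small. Your final paragraph names the gap honestly and gestures at a stratification-by-$Q_i$ argument with H\"older, but it contains no actual estimate, no choice of truncation scales, and no mechanism showing how the missing factor $(A/B)^{3/2}$ is recovered. That passage is the entire content of Kesten's improvement, and leaving it as ``the technical heart of Kesten's argument'' means you have reproduced the classical Kolmogorov--Rogozin bound but not the lemma as stated. To close this out you would either need to carry through Kesten's grouping argument in detail, or do what the paper does and cite~\cite{KE}.
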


Secondly, if $D_1,\dots, D_m$ is a sequence of independent events each with probability $p$, and $\eps>0$,  and $N_D(m)=\card{\{i\in\{1,\dots,m\}:\ D_i\text{ occurs}\}}=\sum_{i=1}^m {\bf 1}_{D_i}$, then
\begin{align}\label{LDP}
\P(\left|N_D(m)-pm\right|\ge \eps m)&\le 2\exp\left(-2\eps^2 m\right);
\end{align}
by Hoeffding's inequality (see e.g.~\cite{Hoef}).

Suppose we have an inhomogeneous PPP with rate 
\begin{align}\label{rate1}
\l(t)=\frac{1}{t^\a},\qquad t>0,    
\end{align}
where $0<\a<1$ is constant; thus $\Lambda(T)=\int_0^T\l(t)\dd t=\frac{T^{1-\a}}{1-\a}$ and conditions~\eqref{eq:conds} are fulfilled. Let $0<\tau_1<\tau_2<\dots$ denote the points of the PPP in the increasing order.

The following statement is probably known, but for the sake of completeness, we provide its short proof.
\begin{claim}\label{clPoi}
Let $Z$ be a Poisson random variable with rate $\mu>0$. Then
\begin{align*}
  \P\left(Z\ge \frac{3\mu}2\right)&\le e^{-\frac{3\ln(3/2)-1}2 \mu}=e^{-0.108\dots \mu},\\
\P\left(Z\le\frac{\mu}2\right)&\le 
e^{-\frac{1-\ln 2}2 \mu}=e^{-0.153\dots \mu}.
\end{align*}
\end{claim}
\begin{proof}
By Markov inequality, since $\E e^{uZ}=e^{\mu(e^u-1)}$, we have for $u>0$
\begin{align*}
\P\left(Z\ge \frac{3\mu}2\right)
\le
\P\left(e^{uZ}\ge e^{\frac{3\mu u}2}\right)
\le e^{-\frac{3\mu u}2} \E e^{uZ}
=e^{-\mu\left[\frac{3u}{2}-e^u+1\right]}.
\end{align*}
Setting $u=\ln(3/2)$ yields the first inequality in the claim.

For the second inequality, we use
\begin{align*}
\P\left(Z\le \frac{\mu}2\right)
\le
\P\left(e^{-uZ}\ge e^{-\frac{\mu u}2}\right)
\le e^{\frac{\mu u}2} \E e^{-uZ}
=e^{-\mu\left[-\frac{u}2-e^{-u}+1\right]}.
\end{align*}
Now let $u=\ln 2$.
\end{proof}

\begin{lemma}\label{lemlarg}
Suppose that the rate of the PPP is  given by~\eqref{rate1}.  For some $c_1>c_0>0$, depending on $\a$ only, we have 
\begin{align*}
\P\left(\tau_k \le c_0 k^{\frac{1}{1-\alpha}}\right)&\le e^{-k/15};
\qquad
\P\left(\tau_k \ge c_1 k^{\frac{1}{1-\alpha}}\right)\le e^{-k/15}. 
\end{align*}
\end{lemma}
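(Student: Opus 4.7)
The plan is to reduce both bounds to a concentration inequality for a sum of i.i.d.\ exponentials using the classical time-change of a PPP. Since $\Lambda(t)=t^{1-\a}/(1-\a)$ is strictly increasing and continuous with $\Lambda(\infty)=\infty$, the random variables $\sigma_k:=\Lambda(\tau_k)$ are the points of a rate-$1$ homogeneous PPP. Equivalently, $\sigma_k=E_1+\cdots+E_k$ with $E_i$ i.i.d.\ $\mathrm{Exp}(1)$. Inverting gives $\tau_k=((1-\a)\sigma_k)^{1/(1-\a)}$, and therefore for every $c>0$
$$
\{\tau_k\le c k^{1/(1-\a)}\}=\{\sigma_k\le c^{1-\a}k/(1-\a)\},
$$
and similarly with $\ge$. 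Hence it suffices to produce constants $0<a_0<1<a_1$ with $\P(\sigma_k\le a_0 k)\le 2e^{-k/18}$ and $\P(\sigma_k\ge a_1 k)\le 2e^{-k/6}$; then $c_0,c_1$ are defined by $c_i^{1-\a}=(1-\a)a_i$.

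Next I would apply the standard Chernoff bound to $\sigma_k\sim\mathrm{Gamma}(k,1)$. Using the MGF $\E[e^{s\sigma_k}]=(1-s)^{-k}$ on $s<1$ and optimizing in $s$ yields
$$
\P(\sigma_k\le a k)\le \bigl(a e^{1-a}\bigr)^{k}\ \ (0<a<1),\qquad \P(\sigma_k\ge a k)\le \bigl(a e^{1-a}\bigr)^{k}\ \ (a>1).
$$
Picking e.g.\ $a_0=\tfrac12$ gives $\bigl(\tfrac12 e^{1/2}\bigr)^k=e^{-(\ln 2-1/2)k}\le e^{-k/18}$ since $\ln 2-1/2\approx 0.193>1/18$; picking $a_1=2$ gives $(2/e)^k=e^{-(1-\ln 2)k}\le e^{-k/6}$ since $1-\ln 2\approx 0.307>1/6$. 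In both cases the quoted bounds are established without even needing the factor $2$ in front, which is ample slack. Once $a_0,a_1$ are fixed, the values $c_0<c_1$ depending only on $\a$ follow directly from $c_i=((1-\a)a_i)^{1/(1-\a)}$.

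There is no substantive obstacle; the proof is essentially a bookkeeping exercise. The only mildly non-obvious point is choosing the time-change to pass from the inhomogeneous PPP to a sum of i.i.d.\ exponentials, after which the statement becomes a routine large-deviations estimate for $\mathrm{Gamma}(k,1)$. If one prefers to avoid the Gamma representation, the same two bounds can be obtained directly from $\{\tau_k\le T\}=\{N(T)\ge k\}$ with $N(T)\sim\mathrm{Poi}(\Lambda(T))$ and the Chernoff bound for Poisson tails; choosing $T=c_0 k^{1/(1-\a)}$ so that $\Lambda(T)=c_0^{1-\a}k/(1-\a)$ is at most (resp.\ at least) a suitable multiple of $k$ gives the claim with the same numerical constants.
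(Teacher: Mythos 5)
Your proof is correct, and it takes a genuinely different (if closely related) route from the paper's. The paper works directly with the counting process: it writes $\P(\tau_k\le s)=\P(N(s)\ge k)$ with $N(s)\sim\mathrm{Poi}(\Lambda(s))$ and then invokes a two-sided Poisson concentration bound $\P(|X-\mu|\ge x)\le 2e^{-x^2/(2(\mu+x))}$ cited from an external note, applied at $T_{2k/3}$ and $T_{2k}$ (so effectively $a_0=2/3$, $a_1=2$). You instead apply the time-change theorem to turn $\sigma_k=\Lambda(\tau_k)$ into a $\mathrm{Gamma}(k,1)$ random variable and run the Chernoff bound for the Gamma MGF by hand, which is fully self-contained and needs no auxiliary citation. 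Your Chernoff computation is correct ($\P(\sigma_k\le ak)\le (ae^{1-a})^k$ for $a<1$, and the mirror bound for $a>1$), your numerics check out ($\ln 2-\tfrac12\approx 0.193>\tfrac1{18}$ and $1-\ln 2\approx 0.307>\tfrac16$), and you even get slightly sharper constants (the leading factor $2$ in the lemma is not needed, and you can afford $a_0=\tfrac12$ rather than $\tfrac23$). You also correctly observe at the end that the same estimates can be re-derived via $\{\tau_k\le T\}=\{N(T)\ge k\}$ and a Poisson Chernoff bound, which is precisely the paper's route; the two derivations are two faces of the same large-deviation computation, but yours avoids quoting an external tail inequality.
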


\begin{proof}
Recall that $N(s)$ denote the number of points of the PPP by time $s$. Then $N(s)\sim Poi(\Lambda(s))$ and 
$$
\P(\tau_k\le s)=\P(N(s)\ge  k).
$$
Let $T_n=\Lambda^{(-1)}(n)=\sqrt[1-\a]{(1-\a)n}$.
Noting that $N(T_n)\sim Poi(n)$ for all $n$,  we have
$$
\P(\tau_k\le T_{2k/3})=\P\left(N(T_{2k/3})\ge k\right)
=\P\left(Z\ge \frac32\mu \right)\le e^{-0.07213\dots k}.
$$
by Claim~\ref{clPoi}, with $Z\sim Poi(\mu)$ where $\mu=2k/3$.
Similarly,
\begin{align*}
\P(\tau_k\ge T_{2k})=\P(N(T_{2k})\le k)
\le \P\left(Z\le \frac12 \mu\right)
\le e^{-0.30685\dots k}.
\end{align*}
by Claim~\ref{clPoi}, with $Z\sim Poi(\mu)$ where $\mu=2k$.
Note that $0.07213>\frac1{15}$, $0,30685>\frac1{15}$.

Now the statement follows with $c_0=\sqrt[1-\a]{2(1-\a)/3}$ and $c_1=\sqrt[1-\a]{2(1-\a)}$.
\end{proof}

\section{Analysis of Model A}
\begin{thm}\label{th:A1} Let $d=1$, $\a\in(1/3,1)$, and
the rate is given by~\eqref{rate1}. Then the embedded walk $W_n$ is  transient~a.s.
\end{thm}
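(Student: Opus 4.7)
I would prove transience by a Borel–Cantelli argument applied to the events $\{|W_n|\le L\}$: if for every fixed $L>0$ we have $\sum_{n\ge 1}\P(|W_n|\le L)<\infty$, then $|W_n|>L$ for all but finitely many $n$ a.s., and taking $L$ along an integer sequence $L_k\uparrow\infty$ forces $|W_n|\to\infty$, which is transience. Condition on the PPP and set $\sigma_k:=\tau_{k+1}-\tau_k$; then $W_n=\sum_{k=0}^{n-1}\sigma_k f_k$ with $f_k=\pm 1$ uniform, i.e.\ conditionally on $(\tau_k)$ the walk $W_n$ is a sum of independent symmetric $\pm\sigma_k$ variables.

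\textbf{Main estimate.} By Lemma~\ref{lemlarg}, on an event $G_n$ with $\P(G_n^c)=O(e^{-cn})$ (summable in $n$, so harmless) one has $\tau_n\gtrsim n^{1/(1-\alpha)}$. Cauchy–Schwarz applied to $\tau_n=\sum_{k<n}\sigma_k$ then yields
$$\Sigma_n^2:=\sum_{k=0}^{n-1}\sigma_k^2\ \ge\ \frac{\tau_n^2}{n}\ \gtrsim\ n^{(1+\alpha)/(1-\alpha)}\qquad\text{on }G_n.$$
The goal is a local-CLT-type bound
$$\P(|W_n|\le L\mid \tau)\ \le\ \frac{CL}{\Sigma_n}\ \lesssim\ L\, n^{-(1+\alpha)/(2(1-\alpha))}\qquad\text{on }G_n,$$
which is summable in $n$ precisely when $(1+\alpha)/(2(1-\alpha))>1$, i.e.\ $\alpha>1/3$, matching the hypothesis exactly.

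\textbf{Main obstacle.} The hard step is establishing the local-CLT bound above. A direct application of Kesten's inequality (Lemma~\ref{lem:kesten}) only gives $\P(|W_n|\le L\mid\tau)=O(n^{-1/2})$: since one is constrained to $a_i\le 2L$, one only gets $\sum a_i^2=O(nL^2)$, which entirely hides the fact that $\Sigma_n^2\gg nL^2$ as soon as $\alpha>0$, and $O(n^{-1/2})$ is not summable. To recover the sharper bound I would pass to characteristic functions and use an Esseen-type concentration inequality
$$\P(|W_n|\le L\mid\tau)\ \le\ CL\int_{-1/L}^{1/L}|\varphi_n(t)|\dd t,\qquad \varphi_n(t)=\prod_{k=0}^{n-1}\cos(\sigma_k t).$$
For $|t|\le 1/\max_k\sigma_k$, the elementary bound $|\cos(\sigma_k t)|\le\exp(-c(\sigma_k t)^2)$ gives $|\varphi_n(t)|\le\exp(-c\Sigma_n^2 t^2)$ and a contribution of order $1/\Sigma_n$, which is exactly the right size. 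The delicate part is $|t|\in(1/\max_k\sigma_k,\,1/L)$: individual factors $\cos(\sigma_k t)$ can be close to $\pm 1$ at resonances, and one must exploit the random spacing of the $\sigma_k$'s (so that, after averaging over $\tau$ using the PPP distribution, $\E|\cos(\sigma_k t)|$ is uniformly bounded away from $1$), or invoke a Halász-type small-ball estimate on $\varphi_n$. I expect this Fourier analysis in the intermediate-frequency regime to be the principal technical difficulty.
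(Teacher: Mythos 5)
Your plan correctly identifies both the target rate $\P(|W_n|\le L)\lesssim L\,n^{-(1+\alpha)/(2(1-\alpha))}$ (note $\frac{1+\alpha}{2(1-\alpha)}=\frac{\alpha}{1-\alpha}+\frac12$, the exact exponent in the paper) and the threshold $\alpha>1/3$, and your diagnosis of the obstacle — that conditioning on the whole PPP and applying Kesten to the $\pm\sigma_k$ steps with $a_i\le 2L$ only yields $O(n^{-1/2})$, because each $Q(\pm\sigma_k;a_i)$ is stuck at $1/2$ — is exactly right. However, the proof is not completed: you sketch a Fourier/Esseen route and explicitly leave open the intermediate-frequency regime $|t|\in(1/\max_k\sigma_k,1/L)$, which you correctly flag as the hard part. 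So as it stands there is a genuine gap.

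The paper resolves the obstacle by \emph{not} conditioning on the full point process. It conditions only on the \emph{even-indexed} arrival times $\tau_{n/2},\tau_{n/2+2},\dots,\tau_n$, leaving each odd $\tau_{i-1}$ random. Given $\tau_{i-2}=t_{i-2}$ and $\tau_i=t_i$, the time $\tau_{i-1}$ has a continuous conditional density $\l(\cdot)/\int_{t_{i-2}}^{t_i}\l$, and on the frequently-occurring event that the two directions cancel ($\mathbf{f}_{i-1}=-\mathbf{f}_{i-2}$) the pair increment $\xi_i=X_{i-1}+X_i=2\tau_{i-1}-(t_{i-2}+t_i)$ inherits this density. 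On a high-probability event where a linear fraction of the gaps $t_i-t_{i-2}$ exceed $\beta n^{\alpha/(1-\alpha)}$, the sup of this density is $O(n^{-\alpha/(1-\alpha)})$, hence $Q(\xi_i;2\rho)=O(\rho\,n^{-\alpha/(1-\alpha)})$ — small, rather than the $1/2$ you were stuck with. Feeding this small concentration into Kesten with $a_i\equiv 2\rho$ and $\Theta(n)$ such indices gives $O(n^{-\alpha/(1-\alpha)-1/2})$, which is summable iff $\alpha>1/3$. So the key idea you are missing is purely probabilistic, not Fourier-analytic: condition on every other $\tau$, pair up consecutive steps, and exploit that the middle point's randomness smooths the pair increment into a low-density random variable whenever the two signs cancel. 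Your Esseen-type route might also be made to work, but it would require genuinely new estimates in the resonant regime, whereas the paper's device sidesteps that regime entirely.
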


\begin{proof}
Assume w.l.o.g.\ that $n$ is even. We will show that for any $\rho>0$ the walk $W_n$ visits $[-\rho,\rho]$ finitely often a.s. With probabilities close to one, both events 
\begin{align}\label{eq:EE}
\begin{split}
\mathcal{E}_1&=\left\{\tau_{n/2}\ge c_0 (n/2)^{\frac{1}{1-\alpha}}\right\},\\
\mathcal{E}_2&=\left\{\tau_{n}\le c_1 n^{\frac{1}{1-\alpha}}\right\}
\end{split}
\end{align}
occur; indeed,
\begin{align}\label{eq:Ac}
  \P(\mathcal{E}_1^c)\le  e^{-n/30}  ,\quad
  \P(\mathcal{E}_2^c)\le  e^{-n/15}  
\end{align}
by Lemma~\ref{lemlarg}.

Since  rate $\l(t)=t^{-\a}$ of the Poisson process is monotonously decreasing, random variables $\tau_i - \tau_{i-1}$, $n/2\le i\le n$, under the condition stated in the event $\mathcal{E}_1$, are stochastically larger than i.i.d.\ exponential random variables 
$\zeta_i$ with rates equal to $[c_0(n/2)^{\frac1{1-\a}}]^\a=\tilde c_0\, n^{-\frac{\alpha}{1-\alpha}}$ for some $\tilde c_0>0$.
For $\zeta$, there exists $\beta=\beta(c_0,\a)>0$ such that
$$
\P\left(\zeta_i>\beta n^{\frac{\alpha}{1-\alpha}}\right)=\exp\left(-\tilde c_0\, n^{-\frac{\alpha}{1-\alpha}}\cdot \beta n^{\frac{\alpha}{1-\alpha}}\right)=e^{-\tilde c_0 \beta}=\frac{2}{3}.
$$
Let
$$
I_n=\left\{i\in[n/2,n]:\ \text{$i$ is even}, \ \tau_i-\tau_{i-2}>\beta n^{\frac{\alpha}{1-\alpha}}\right\},
$$
note that $\card{I_n}\le n/4$.
Then, since $\tau_i-\tau_{i-2}>\tau_i-\tau_{i-1}$, by stochastic monotonicity and Hoeffding's inequality~\eqref{LDP} with $m=\frac n4$, $p=\frac23$, and $\eps=\frac23-\frac12=\frac16$
\begin{align}\label{eq:IA}
\P\left(\card{I_n}<\frac 12\cdot \frac n4\mid \mathcal{E}_1 \right)\le 
\P\left(
\card{\left\{i\in[n/2,n]:\ \text{$i$ is even}, \ \zeta_i>\beta n^{\frac{\alpha}{1-\alpha}}
\right\}}<\frac n8\right)\le 
2 e^{-n/72}.
\end{align}
Let
$$
J_n=\left\{i\in I_n:\ {\bf f}_{i-1}=-{\bf f}_{i-2}\right\}.
$$
Since ${\bf f}_i$ are i.i.d.\ and independent of $\{\tau_1,\tau_2,\dots\}$, and $\P({\bf f}_{i-1}=-{\bf f}_i)=1/2$, on the event $\{\card{I_n}\ge n/8\}$ we have by~\eqref{LDP} with $m=\card{I_n}$, $p=1/2$, $\eps=1/34$
\begin{align}\label{eqJn}
\begin{split}
\P\left(\card{J_n}<\frac{n}{17}\mid \card{I_n}\ge \frac n8\right)
&\le 
\P\left(\left|\card{J_n}-\frac{\card{I_n}}2\right|>\frac{\card{I_n}}{34}\mid \card{I_n}\ge \frac n8\right)
\\ &
\le 2 \exp\left\{-2\cdot \frac1{34^2}\cdot \frac n8\right\}=2e^{-c_* n}
\end{split}
\end{align}
where $c_*=\frac 1{4624}$.

Our proof will rely on conditioning over the {\em even stopping times}, that is, on the event $$
\mathcal{D}=\mathcal{D}_0\cap \mathcal{D}_1\cap \mathcal{E}_2,
$$
where
\begin{align}\label{eqD0}
\mathcal{D}_0&=\{ \tau_{n/2}=t_{n/2},\tau_{n/2+2}=t_{n/2+2},\dots,\tau_n=t_n\},
\\ \nonumber
\mathcal{D}_1&=\left\{\card{J_n}\ge n/17\right\}\cap \mathcal{E}_1
\end{align}
for some strictly increasing sequence $0<t_{n/2}<t_{n/2+2}<\dots<t_n$. 
Note that
\begin{align}\label{eqD1}
\begin{split}
\P(\mathcal{D}_1^c)
&
\le  \P\left(\mathcal{E}_1^c\right)+\P\left(\card{J_n}< \frac{n}{17}\mid  \mathcal{E}_1\right)
\\
& \le 
\P\left(\mathcal{E}_1^c\right)
+
\P\left(\card{J_n}< \frac{n}{17}\mid \card{I_n}\ge \frac n8, \mathcal{E}_1\right)
+
\P\left(\card{I_n}< \frac n8\mid  \mathcal{E}_1\right)
\\
&\le 
e^{-n/30}+2 e^{-c_* n}+2e^{-n/72}
\le 5 e^{-c_*n}
\end{split}
\end{align}
by~\eqref{eq:Ac}, \eqref{eq:IA}, and \eqref{eqJn}.

We denote the $i^{th}$ step of the embedded walk by $X_i=W_i-W_{i-1}$, $i=1,2,\dots$, and
$$
\xi_{i}:=X_{i-1}+X_{i}=(\tau_{i-1}-\tau_{i-2}){\bf f}_{i-2}+(\tau_{i}-\tau_{i-1}){\bf f}_{i-1}.
$$ 
Conditioned on $\mathcal{D}$, random variables $\xi_2,\xi_4,\xi_6,\dots$ are then independent.

\begin{lemma}\label{lemm1}
For $i\in J_n$ we have
$$
\sup_{x\in\R} \P \left(
\xi_i\in[x-\rho,x+\rho]\mid \mathcal{D}
\right)
\leq\ \frac{c\rho}{n^\frac{\alpha}{1-\alpha}}
$$ 
for some $c=c(\a)>0$.
\end{lemma}
\begin{proof}[Proof of Lemma~\ref{lemm1}]
Given $\tau_{i-2}=t_{i-2}$ and $\tau_i=t_i$, $\tau_{i-1}$ has the distribution of the only point of the PPP on $[t_{i-2},t_i]$ with rate $\l(t)$ conditioned on the fact that there is exactly one point in this interval. Hence the conditional density of $\tau_{i-1}$ is given by
$$
f_{\tau_{i-1}\mid \mathcal{D}}(x)=\begin{cases}
    \frac{\l(t)}{\int_{t_{i-2}}^{t_i} \l(u)\dd u}, &\text{if }x\in[t_{i-2},t_i];\\
    0,&\text{otherwise.}
\end{cases}
$$
Assume w.l.o.g.\ that $X_{i-1}>0>X_i$ (recall that $i\in J_n$).
Then
$$
\xi_i=[\tau_{i-1}-\tau_{i-2}]-[\tau_{i}-\tau_{i-1}]=2\tau_{i-1}-(t_{i-2}+t_i)
$$
so that the maximum of the conditional density of $\xi_i$ equals one-half of the maximum of the conditional density of $\tau_{i-1}$.
At the same time, since $\l$ is a decreasing function,
\begin{align*}
\sup_{x\in\R} f_{\tau_{i-1}\mid \mathcal{D}}(x)&=\frac{\l(t_{i-2})}{\int_{t_{i-2}}^{t_i} \l(u)\dd u}
\le 
\frac{\l(t_{i-2})}{ (t_i-t_{i-2})\l(t_{i})}
\le \frac{\l(t_{i-2})}{\beta n^{\frac{\a}{1-\a}}\l(t_{i})}
= \frac1{\beta n^{\frac{\a}{1-\a}}}
\cdot\left(\frac{t_i}{t_{i-2}}\right)^\a
\\ &
\le 
\frac1{\beta n^{\frac{\a}{1-\a}}}
\cdot\left(\frac{\tau_n}{\tau_{n/2}}\right)^\a  
\le 
\frac1{\beta n^{\frac{\a}{1-\a}}}
\cdot\left(\frac{c_1}{c_0 2^{-\frac{1}{1-\a}}}\right)^\a  
\end{align*}
since the events $\mathcal{E}_1$ and $\mathcal{E}_2$ occur. This implies the stated result with $c=\b^{-1}\left(c_0^{-1} c_1 \sqrt[1-\a]{2} \right)^\a $.
\end{proof}

Now we divide $W_n$ into two portions:
\begin{align*}
   A&=\sum_{i\in J_n} \xi_i; \\
   B&=W_n-A=\sum_{i\in \{2,4,\dots,n\}\setminus  J_n} \xi_i; 
\end{align*}

\begin{lemma}\label{lemm1K}
We have
$$
\sup_{x\in\R} \P \left(
A\in[x-\rho,x+\rho]\mid \mathcal{D}
\right)
\leq\ \frac{C}{n^{\frac{\alpha}{1-\alpha}+\frac12}}
$$ 
for some $C=C(\a,\rho)>0$.
\end{lemma}
\begin{proof}[Proof of Lemma~\ref{lemm1K}]
The result follows immediately from Lemma~\ref{lem:kesten} with $a_i\equiv 2\rho=L$, using Lemma~\ref{lemm1}, and the fact that $\card{J_n}\ge n/17$.
\end{proof}

So,
\begin{align*}
\P(|W_n|\le \rho\mid \mathcal{D})
&=\P(A+B\in[-\rho,\rho]\mid \mathcal{D})
=\int \P(A+b\in[-\rho,\rho]\mid \mathcal{D})
f_{B|\mathcal{D}}(b) \dd b    
\\
&\le 
\int \sup_x \P(A\in[x-\rho,x+\rho]\mid \mathcal{D})
f_{B|\mathcal{D}}(b) \dd b
\le \frac{C}{n^{\frac{\alpha}{1-\alpha}+\frac12}}
\end{align*}
where $f_{B|\mathcal{D}}(\cdot)$ is the density of $B$ conditional on $\mathcal{D}$.

Finally, using~\eqref{eq:Ac} and~\eqref{eqD1} we get
\begin{align*}
    \P(W_n\in [-\rho,\rho])\le
    \P(W_n\in [-\rho,\rho]\mid \mathcal{D})
    +\P\left(\mathcal{D}_1^c\right)
    +\P\left(\mathcal{E}_2^c\right)
    \le 
    \frac{C}{n^{\frac{\alpha}{1-\alpha}+\frac12}}+
    5 e^{-c_* n}+e^{-n/15}
\end{align*}
which is summable over $n$, so we can apply the Borel-Cantelli lemma to show that $\{|W_n|\le \rho\}$ occurs finitely often a.s.
\end{proof}

\begin{thm} \label{thm2}
Let $d\ge 2$,  $\alpha\in(0,1)$, and the rate of the PPP is given by~\eqref{rate1}. Then $Z(t)$ is transient a.s.
\end{thm}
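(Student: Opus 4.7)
The plan is to establish, for each fixed $\rho>0$, summability of $\sum_n \P(H_n^\rho)$ where $H_n^\rho=\{Z(t)\in[-\rho,\rho]^d\text{ for some }t\in[\tau_n,\tau_{n+1}]\}$; Borel--Cantelli together with $\tau_n\to\infty$ then yields $\|Z(t)\|>\rho$ for all sufficiently large $t$ a.s., and intersecting over a countable sequence of $\rho$'s gives $\|Z(t)\|\to\infty$, i.e.\ transience. The key geometric observation for Model A is that on each interval $[\tau_n,\tau_{n+1}]$ the trajectory of $Z$ is an axis-aligned segment of length $T_n=\tau_{n+1}-\tau_n$ in direction ${\bf f}_n=\pm {\bf e}_{j_n}$, so $H_n^\rho$ occurs iff all $d-1$ perpendicular coordinates of $W_n$ lie in $[-\rho,\rho]$ while the parallel coordinate $W_n^{(j_n)}$ lies in an interval of width $2\rho+T_n$. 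A product-type concentration bound in $d$ dimensions is therefore what I need.

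I would adapt the pairing/conditioning machinery from the proof of Theorem~\ref{th:A1}. On the high-probability event $\mathcal{E}_1\cap\mathcal{E}_2$, condition on the even-indexed $\tau_k$'s in $[n/2,n]$, on $\tau_{n+1}$, and on the axes $A_k=\mathrm{axis}({\bf f}_k)$ for every $k\le n$; call the resulting event $\mathcal{D}$. Among the paired increments $\xi_i=X_{i-1}+X_i$ ($i$ even in $[n/2+2,n]$) single out the sub-collection $J_n^{\star}$ of pairs for which ${\bf f}_{i-2}$ and ${\bf f}_{i-1}$ share the same axis and have opposite signs; each pair lies in $J_n^\star$ with probability $1/(2d)$, so Hoeffding's inequality~\eqref{LDP} gives $|J_n^\star|\ge c n$ w.h.p., and a further split by the common axis yields $|J_n^{\star,(j)}|\ge c'n$ for every $j=1,\dots,d$. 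For $i\in J_n^{\star,(j)}$ only the $j$-th component of $\xi_i$ is nonzero, equal to $(2\tau_{i-1}-t_{i-2}-t_i)\sigma_{i-2}$, with conditional density at most $c/n^{\alpha/(1-\alpha)}$ by exactly the computation used in Lemma~\ref{lemm1}. Decompose $W_n^{(j)}=A^{(j)}+B^{(j)}$ with $A^{(j)}=\sum_{i\in J_n^{\star,(j)}}(\xi_i)_j$; the crucial point is that different $A^{(j)}$'s draw on disjoint odd-indexed $\tau_{i-1}$'s and disjoint sign variables, and are therefore conditionally independent across $j$.

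Apply Lemma~\ref{lem:kesten} to each $A^{(j)}$, as in Lemma~\ref{lemm1K}, to obtain $\sup_x\P(A^{(j)}\in[x-R,x+R]\mid\mathcal{D})\le C R/n^{1/2+\alpha/(1-\alpha)}$ uniformly for $R>0$ up to order $n^{\alpha/(1-\alpha)}$. Integrating out each $B^{(j)}$ preserves the bound, and conditional coordinate independence lets me multiply:
\[
\P(H_n^\rho\mid\mathcal{D})\le C\,(2\rho+T_n)(2\rho)^{d-1}/n^{d/2+d\alpha/(1-\alpha)}.
\]
Averaging $T_n=O(n^{\alpha/(1-\alpha)})$ on $\mathcal{E}_2$ and adding the exponentially small probabilities of the bad events yields $\P(H_n^\rho)=O(n^{-d/2-(d-1)\alpha/(1-\alpha)})$; the exponent strictly exceeds $1$ whenever $d\ge2$ and $\alpha\in(0,1)$, so the series converges and Borel--Cantelli completes the proof.

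The main obstacle is reconciling two requirements: a continuous density on each paired increment (needed for the $n^{1/2+\alpha/(1-\alpha)}$ Kesten improvement over the naive $\sqrt{n}$ Littlewood--Offord bound, which would only give $n^{-d/2}$ and would fail to be summable at $d=2$), together with conditional independence across the $d$ coordinates (to multiply those per-coordinate bounds). Pairing along ${\bf f}_{i-1}=-{\bf f}_{i-2}$ alone is insufficient, because when the two vectors lie on different axes $\xi_i$ has nonzero components on two coordinates coupled through the shared $\tau_{i-1}$. Restricting to the same-axis opposite-sign sub-case dispels this coupling at the cost of only a constant factor in the count of useful pairs. A secondary, milder issue is the ``wide'' coordinate in direction ${\bf f}_n$, whose tolerance window has width $\sim T_n$; this factor is absorbed by the extra narrow coordinate(s) available for $d\ge2$, precisely why no lower bound on $\alpha$ is needed here, unlike in Theorem~\ref{th:A1}.
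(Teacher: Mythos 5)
Your proposal is correct and follows essentially the same route as the paper's proof: the same geometric decomposition of the hitting event (the $d-1$ perpendicular coordinates must lie in $[-\rho,\rho]$ while the $j_n$-th lies in a window of width $O(T_n)$), the same conditioning on the axes of the ${\bf f}_k$'s to obtain coordinate independence given $\mathcal D$, the same restriction to same-axis opposite-sign pairs (the paper's $J_n^1,J_n^2$ are your $J_n^{\star,(1)},J_n^{\star,(2)}$), the same density bound via the computation in Lemma~\ref{lemm1}, Kesten's inequality as in Lemma~\ref{lemm1K}, the high-probability bound on $\tau_{n+1}-\tau_n$ (where the paper's Lemma~\ref{lem:Y} makes precise your informal ``$T_n=O(n^{\alpha/(1-\alpha)})$'' at the cost of an $\eps$ in the exponent), and Borel--Cantelli. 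The only cosmetic difference is that you carry out the argument directly in general $d$, whereas the paper writes out $d=2$ and remarks that the general case is an easy extension.
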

\begin{rema}
The above result holds also for $\a=1$, and the proof is more or less identical to that of Theorem~5.2 in~\cite{EV}, once we establish that a.s.\ $\tau_n>e^{cn}$ for some $c>0$ and all large $n$; the latter follows from the arguments similar to that of Lemma~\ref{lemlarg}.
\end{rema}
\begin{proof}
We will provide the proof only for the case $d=2$ and $\rho=1$; it can be easily generalized for all $d\ge 3$ and $\rho>0$. Denote the coordinates of the embedded walk by $X_n$ and $Y_n$, thus $W_n=(X_n,Y_n)\in\R^2$.  Fix some small $\eps>0$ and consider the event
\begin{align*}
\mathcal{R}_n&=\left\{Z(t)\in [-1,1]^2\text{ for some }t\in (\tau_n,\tau_{n+1}]\right\}
\end{align*}
We will show that events $\mathcal{R}_n$ occur finitely often a.s., thus ensuring the transience of $Z(t)$.

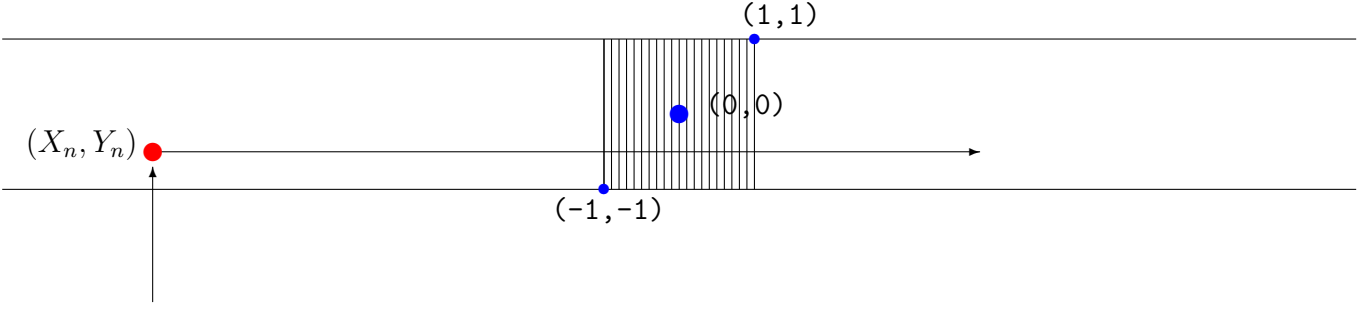
\begin{figure}
\setlength{\unitlength}{1cm}
\begin{picture}(18,6)
\put(0,2){\line(1,0){18}}
\put(0,4){\line(1,0){18}}
\put(8,2){\line(0,1){2}}
\put(10,2){\line(0,1){2}}
\multiput(8,2)(0.1,0){20}{\line(0,1){2}}
\put(2,0.5){\vector(0,1){1.8}}
\put(2,2.5){\vector(1,0){11}}
\put(9,3){{\color{blue}\circle*{0.25}}\hbox{\kern3pt \texttt{(0,0)}}}
\put(2,2.5){{\color{red}\circle*{0.25}}\hbox{\kern-55pt \texttt{$(X_n,Y_n)$}}}
\put(9.8,4.2){\texttt{(1,1)}}
\put(7.3,1.6){\texttt{(-1,-1)}}
\put(8,2){\color{blue}\circle*{0.125}}
\put(10,4){\color{blue}\circle*{0.125}}
\end{picture}    
    \caption{Model A: recurrence of conservative random walk on $\R^2$}
    \label{fig:1}
\end{figure}

For the walk $Z(t)$ to hit $[-\rho,\rho]^2$ between times $\tau_n$ and $\tau_{n+1}$, we have to have either 
\begin{itemize}
    \item $|X_n|\le \rho$, ${\bf f}_n=-\mathrm{sign}(Y_n) {\bf e}_2$, and $ \tau_{n+1}-\tau_n\ge |Y_n|-1$, or
    \item $|Y_n|\le \rho$, ${\bf f}_n=-\mathrm{sign}(X_n) {\bf e}_1$, and $ \tau_{n+1}-\tau_n\ge |X_n|-1$
\end{itemize}
(see Figure~\ref{fig:1}.)

Using the fact that ${\bf f}_n$ are independent of anything, we get
$$
\P(\mathcal{R}_n)\le 
\frac14\, \P\left(|X_n|\le \rho, \tau_{n+1}-\tau_n\ge |Y_n|-1\right) 
+
\frac14\, \P\left(|Y_n|\le \rho, \tau_{n+1}-\tau_n\ge |X_n|-1\right).
$$
We will show that $\P\left(|X_n|\le \rho,|Y_n|\le \tau_{n+1}-\tau_n+1\right)$ is summable in $n$ (and the same holds for the other summand by symmetry), hence by the Borel-Cantelli lemma that $\mathcal{R}_n$ occur finitely often a.s.

\begin{lemma}\label{lem:Y}
Assume that $\eps\in (0,1)$. Then  there exists $c_1^*=c_1^*(\eps)$ such that for all sufficiently large $n$
    $$
    \P\left(\tau_{n+1}-\tau_n+1\ge n^{\frac{\a}{1-\a}+\eps}\right)
    \le 3\, e^{-c_1^* n^\eps}
    $$
\end{lemma}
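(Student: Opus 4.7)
The plan is to condition on $\tau_n$ and use the independent increments of the PPP. Conditional on $\tau_n = t$, the waiting time $\tau_{n+1} - \tau_n$ has survival function
$$
\P(\tau_{n+1} - \tau_n > s \mid \tau_n = t) = \exp\left(-\left[\Lambda(t+s) - \Lambda(t)\right]\right),
$$
so a large gap is unlikely provided $\int_{\tau_n}^{\tau_n + s} \l(u)\dd u$ is bounded from below. Since $\l(t) = t^{-\a}$ decays, this requires $\tau_n$ not to be too large.

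First, I would invoke Lemma~\ref{lemlarg} to restrict to the event $\mathcal{E}' = \{\tau_n \le c_1 n^{1/(1-\a)}\}$, which fails with probability at most $2 e^{-n/6}$. Second, since $\l$ is decreasing,
$$
\Lambda(t+s) - \Lambda(t) \ge s\,\l(t+s) = \frac{s}{(t+s)^{\a}}.
$$
Setting $s = n^{\a/(1-\a)+\eps} - 1$ and using $\eps < 1$ (so $n^{\a/(1-\a)+\eps} \le n^{1/(1-\a)}$ for all $n\ge 1$), on $\mathcal{E}'$ we have $\tau_n + s \le (c_1+1)\,n^{1/(1-\a)}$, hence for sufficiently large $n$
$$
\frac{s}{(\tau_n + s)^{\a}} \ge \frac{n^{\a/(1-\a)+\eps} - 1}{(c_1+1)^{\a}\, n^{\a/(1-\a)}} \ge c_1^*\, n^{\eps}
$$
for some $c_1^* = c_1^*(\a,\eps) > 0$.

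Combining the two estimates and taking the supremum over $t \le c_1 n^{1/(1-\a)}$,
$$
\P\!\left(\tau_{n+1} - \tau_n + 1 \ge n^{\a/(1-\a)+\eps}\right) \le \P\!\left((\mathcal{E}')^c\right) + e^{-c_1^* n^{\eps}} \le 2\, e^{-n/6} + e^{-c_1^* n^{\eps}}.
$$
Since $n^\eps \ll n$, the first term is eventually dominated by $e^{-c_1^* n^\eps}$ (after possibly shrinking $c_1^*$), giving the claimed bound $3\, e^{-c_1^* n^\eps}$.

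I do not foresee a substantial obstacle; this is a routine PPP tail estimate coupled with the a-priori control on $\tau_n$ from Lemma~\ref{lemlarg}. The only point needing attention is the constraint $\eps < 1$, which guarantees $\tau_n + s$ remains of the same order as $\tau_n$ on the good event, so that the lower bound $s(\tau_n + s)^{-\a}$ actually grows like $n^\eps$ rather than degenerating.
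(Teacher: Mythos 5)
Your proof is correct and follows essentially the same route as the paper: both restrict to the good event $\{\tau_n \le c_1 n^{1/(1-\a)}\}$ from Lemma~\ref{lemlarg}, both use the conditional survival function $\exp(-[\Lambda(t+s)-\Lambda(t)])$, and both show the exponent grows like $n^\eps$. The only cosmetic difference is that you lower-bound $\Lambda(t+s)-\Lambda(t)$ by $s\,\l(t+s)$ via monotonicity of $\l$, whereas the paper uses the explicit form of $\Lambda$ and expands $(t+s)^{1-\a}-t^{1-\a}$; this is a minor technical variant, not a different argument.
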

\begin{proof}[Proof of Lemma~\ref{lem:Y}]
For $s,t\ge 0$ we have
$$
  \P(\tau_{n+1}-\tau_n\ge s\mid \tau_n=t)
  =\exp\left\{-[\Lambda(t+s)-\Lambda(t)]\right\}  
  =\exp\left\{-\frac{(t+s)^{1-\a}-t^{1-\a}}{1-\a}\right\},
$$
so if $t<c_1 n^{\frac1{1-\a}}$, where $c_1$ is the constant from Lemma~\ref{lemlarg}, and $s=n^{\frac{\a}{1-\a}+\eps}-1=o\left(n^{\frac1{1-\a}}\right)$, we have
\begin{align*}
  \P(\tau_{n+1}-\tau_n\ge s\mid \tau_n=t) &\le 
  \exp\left\{-\frac{(c_1 n^{\frac1{1-\a}}+s)^{1-\a}-c_1^{1-\a} n}{1-\a}\right\}  
  \\ &\le
  \exp\left\{-\frac{n c_1^{1-\a}}{1-\a}\left[\left(1+\frac{n^{\frac{\a}{1-\a}+\eps}-1}{c_1 n^{\frac1{1-\a}}}\right)^{1-\a}-1\right] \right\}  
  =\exp\left\{-\frac{n^\eps (1+o(1))}{c_1^{\a}}\right\}.
\end{align*}
 Consequently, using  Lemma~\ref{lemlarg},
\begin{align}\label{eqnotlarge}
    \P\left(\tau_{n+1}-\tau_n\ge n^{\frac{\a}{1-\a}+\eps}-1\right)\nonumber 
    &\le 
    \P\left(\tau_{n+1}-\tau_n\ge n^{\frac{\a}{1-\a}+\eps}-1 \mid \tau_n< c_1 n^{\frac1{1-\a}}\right)
    +\P\left(\tau_n\ge c_1 n^{\frac1{1-\a}}\right) 
    \\ & \le e^{-c_1^* n^\eps}+e^{-n/15}
    \le 2 e^{-c_1^* n^\eps}
\end{align}
for some $c_1^*\in(0,1/15)$.
\end{proof}
Now we will modify the proof of Theorem~\ref{th:A1} slightly to adapt to our needs.  Let the events~$\mathcal{E}_1$ and~$\mathcal{E}_2$ be the same as in the proof of this theorem. We will also use the set $I_n$, but instead of $J_n$ we will  introduce the sets
\begin{align*}
    J_n^1 &=\{i\in I_n:\ {\bf f}_{i-1}=-{\bf f}_{i-2}, \ {\bf f}_{i-1}\in\{ {\bf e}_1,-{\bf e}_1\}\};\\
    J_n^2 &=\{i\in I_n:\ {\bf f}_{i-1}=-{\bf f}_{i-2}, \ {\bf f}_{i-1}\in\{ {\bf e}_2,-{\bf e}_2\}\}.
\end{align*}
Similarly to the proof of Theorem~\ref{th:A1}, inequality~\eqref{eqD1}, we immediately obtain that
\begin{align}\label{eq:JA2}
\begin{split} 
\P(\card{J_n^1}\le n/34)&\le
\P(\card{J_n^1}\le n/34\mid \mathcal{E}_1)+\P(\mathcal{E}_1^c)\le 6 e^{-c_*'n},\\
\P(\card{J_n^2}\le n/34)&\le
\P(\card{J_n^2}\le n/34\mid \mathcal{E}_1)+\P(\mathcal{E}_1^c)\le 6 e^{-c_*'n} 
\end{split}
\end{align}
for some $c_*'>0$.

Fix a deterministic sequence of unit vectors ${\bf g}_1,\dots,{\bf g}_n$ such that each ${\bf g}_i\in\{\pm{\bf e}_1,\pm{\bf e}_2\}$.
We now also define the event
\begin{align*}
\mathcal{D}&=
\{{\bf f}_1={\bf g}_1,\dots,{\bf f}_n={\bf g}_n\}
\cap
\{\tau_k=t_k\text{ for all } k\le n:\ k\text{  is even or }
{\bf f}_k\perp {\bf f}_{k+1}\}
\\ &
\cap \left\{\card{J^1_n}\ge \frac{n}{34}\right\}
\cap \left\{\card{J^2_n}\ge \frac{n}{34}\right\}\cap \mathcal{E}_1 \cap \mathcal{E}_2.
\end{align*}
Therefore, repeating the previous arguments for each of the horizontal  and vertical components of $W_n$ we immediately obtain
\begin{align*}
\P(|X_n|\le 1\mid \mathcal{D})
&\le \frac{C}{n^{\frac{\alpha}{1-\alpha}+\frac12}};\\
\sup_{x\in\R}\P(|Y_n-x|\le 1\mid \mathcal{D})
&\le \frac{C}{n^{\frac{\alpha}{1-\alpha}+\frac12}}.
\end{align*}
The second inequality implies that
\begin{align*}
\P(|Y_n|\le n^{\frac{\a}{1-\a}+\eps} \mid \mathcal{D})
\le  n^{\frac{\a}{1-\a}+\eps} \cdot \frac{C}{ n^{\frac{\alpha}{1-\alpha}+\frac12}}
=\frac{C}{ n^{\frac12-\eps}}.
\end{align*}
Now, by Lemma~\ref{lem:Y}
\begin{align*}
\P\left(|X_n|\le 1,|Y_n|\le \tau_{n+1}-\tau_n+1\right)
&\le \P\left(|X_n|\le 1,|Y_n|\le \tau_{n+1}-\tau_n+1\mid \tau_{n+1}-\tau_n+1\le n^{\frac{\a}{1-\a}+\eps}\right)\\ & +
\P\left(\tau_{n+1}-\tau_n+1\ge n^{\frac{\a}{1-\a}+\eps}\right)
\\ & \le
\P\left(|X_n|\le 1,|Y_n|\le  n^{\frac{\a}{1-\a}+\eps}\right)  +
3 e^{-c_1^*n^\eps}.
\end{align*}
Observing that $X_n$ and $Y_n$ are actually independent given $\mathcal{D}$, we conclude
that
\begin{align*}
\P\left(|X_n|\le 1,|Y_n|\le  n^{\frac{\a}{1-\a}+\eps}\right) &\le 
\P\left(|X_n|\le 1,|Y_n|\le  n^{\frac{\a}{1-\a}+\eps}\mid \mathcal{D}\right)\\ & +\P\left(\card{J_n^1}<\frac{n}{34}\text{ or }\card{J_n^1}<\frac{n}{34}\right)
+\P(\mathcal{E}_1^c)+\P(\mathcal{E}_2^c) 
\\ &
\le 
\frac{C}{n^{\frac{\a}{1-\a}+\frac12}}
\cdot
\frac{C}{n^{\frac12-\eps}}
+12\, e^{-c'_* n}+ e^{-n/15}+ e^{-n/15}
=\frac{C^2+o(1)}{n^{1+\left[\frac{\a}{1-\a}-\eps\right]}}
\end{align*}
by~\eqref{eq:Ac} and~\eqref{eq:JA2}. Assuming $\eps\in\left(0,\frac{\a}{1-\a}\right)$, the RHS is summable, and thus $\P\left(|X_n|\le \rho,|Y_n|-1\le \tau_{n+1}-\tau_n\right)$ is also summable in $n$.
\end{proof}
\vskip 1cm

\begin{thm}\label{th:sat} 
Let $d\geq2$, and  the rate  be given by
\begin{align}\label{rate2}
\l(t)=
\begin{cases}
\frac{1}{(\ln{t})^\beta}, &t\ge e;\\
0, &\text{otherwise}.
\end{cases}   
\end{align}
Then $Z(t)$ is transient as long as $\beta >2.$
\end{thm}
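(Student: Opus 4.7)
The plan is to imitate the three-part strategy of Theorem~\ref{thm2}, substituting the correct logarithmic scalings for the rate $\lambda(t)=(\ln t)^{-\beta}$. Three ingredients must be re-established: (i) concentration of the Poisson times $\tau_n$; (ii) an upper-tail bound on the gap $\tau_{n+1}-\tau_n$; (iii) a Kesten--Rogozin anti-concentration estimate per coordinate of $W_n$. For (i), an elementary computation gives $\Lambda(T)\sim T/(\ln T)^\beta$, hence $\Lambda^{(-1)}(n)\sim n(\ln n)^\beta$; then~\eqref{eqPoisbound} applied as in Lemma~\ref{lemlarg} yields constants $0<c_0<c_1$ and exponentially good events $\mathcal{E}_1=\{\tau_{n/2}\ge c_0(n/2)(\ln n)^\beta\}$, $\mathcal{E}_2=\{\tau_n\le c_1 n(\ln n)^\beta\}$. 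On $\mathcal{E}_1$ every gap $\tau_{i+1}-\tau_i$, $n/2\le i\le n$, stochastically dominates an exponential with rate of order $(\ln n)^{-\beta}$, so the Hoeffding construction of $I_n$ and of the sets $J_n^1,\dots,J_n^d$ of cancelling pairs in each coordinate direction, each of size $\ge cn$ w.h.p., carries over verbatim.

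For (ii), on $\mathcal{E}_2$ and $0\le s\le\tau_n$ we have $\ln(\tau_n+s)\le 2\ln n+O(1)$, giving $\Lambda(\tau_n+s)-\Lambda(\tau_n)\ge cs/(\ln n)^\beta$; choosing $s^*:=K(\ln n)^{\beta+1}$ with $K$ sufficiently large yields the analog of Lemma~\ref{lem:Y},
\begin{align*}
\P\!\left(\tau_{n+1}-\tau_n\ge s^*\right)\le n^{-2}+\P(\mathcal{E}_2^c).
\end{align*}
For (iii), the crucial ratio $\lambda(t_{i-2})/\lambda(t_i)=(\ln t_i/\ln t_{i-2})^\beta$ in the analog of Lemma~\ref{lemm1} is bounded on $\mathcal{E}_1\cap\mathcal{E}_2$ by $[\ln(c_1 n(\ln n)^\beta)/\ln(c_0(n/2)(\ln n)^\beta)]^\beta\to 1$, so for $i\in J_n^\ell$ one has $\sup_x \P(\xi_i\in[x-\rho,x+\rho]\mid\mathcal{D})\le c\rho/(\ln n)^\beta$, and Lemma~\ref{lem:kesten} with $a_i=L=2\rho$ and $|J_n^\ell|\ge cn$ delivers, for each coordinate $\ell$,
\begin{align*}
\sup_x \P\!\left(|W_n\cdot{\bf e}_\ell-x|\le\rho\,\big|\,\mathcal{D}\right)\le\frac{C}{\sqrt{n}\,(\ln n)^\beta}.
\end{align*}

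Conditionally on $\mathcal{D}$ the $d$ coordinates of $W_n$ are independent, by the same decomposition-by-pair-type argument used in Theorem~\ref{thm2}. For $\mathcal{R}_n=\{Z(t)\in[-\rho,\rho]^d\text{ for some }t\in(\tau_n,\tau_{n+1}]\}$ to occur with ${\bf f}_n=\pm{\bf e}_\ell$, the other $d-1$ coordinates of $W_n$ must lie in $[-\rho,\rho]$ and the $\ell$-th within $\tau_{n+1}-\tau_n+\rho$ of zero; restricting to $\{\tau_{n+1}-\tau_n\le s^*\}$ and unioning over $\ell$,
\begin{align*}
\P\!\left(\mathcal{R}_n\mid\mathcal{D},\,\tau_{n+1}-\tau_n\le s^*\right)\lesssim \frac{s^*}{\sqrt{n}\,(\ln n)^\beta}\cdot\left(\frac{1}{\sqrt{n}\,(\ln n)^\beta}\right)^{d-1}=\frac{1}{n^{d/2}(\ln n)^{(d-1)\beta-1}}.
\end{align*}
For $d=2$ this equals $O(n^{-1}(\ln n)^{-(\beta-1)})$, summable if and only if $\beta>2$; for $d\ge 3$ the factor $n^{-d/2}$ alone ensures summability. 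Adding the summable errors from $\mathcal{E}_i^c$, $\mathcal{D}^c$, and $\{\tau_{n+1}-\tau_n>s^*\}$, Borel--Cantelli yields transience. The tight bookkeeping that pins down $\beta>2$ is the main obstacle: Kesten--Rogozin produces only $(\ln n)^{-\beta}$ per coordinate, whereas the gap tail forces the cutoff $s^*\asymp(\ln n)^{\beta+1}$, so a net factor $(\ln n)^{\beta-1}$ survives in the denominator and the resulting series converges exactly when $\beta-1>1$.
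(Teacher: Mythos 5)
Your proposal is correct and follows essentially the same three-step architecture as the paper's proof: (i) use $\Lambda(T)\sim T(\ln T)^{-\beta}$ to get concentration $\tau_n\asymp n(\ln n)^\beta$, (ii) bound the gap $\tau_{n+1}-\tau_n$ by a threshold of order $(\ln n)^{\beta+1}$, and (iii) run the Kesten anti-concentration argument to get per-coordinate bounds of order $1/(\sqrt n\,(\ln n)^\beta)$, then multiply and sum. Your bookkeeping $n^{-d/2}(\ln n)^{-((d-1)\beta-1)}$ recovers the paper's $n^{-1}(\ln n)^{-(\beta-1)}$ at $d=2$ and explains why $\beta>2$ is exactly the threshold; the only (welcome) addition is that you spell out $d\ge3$, which the paper dismisses as straightforward.
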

\begin{proof}
The proof is analogous to the proof of Theorem~\ref{thm2}; we will only indicate how that prove should be modified to this case. As before, we assume that $\rho=1$, $d=2$, without loss of generality. First, we prove the following
\begin{lemma}\label{lem:ln}
Let 
$$
\Lambda(T)=\int_0^T\l(s)\dd s=\int_e^T\frac{\dd s}{(\ln s)^\beta}.
$$
Then
$$
\lim_{T\to\infty}\frac{ \Lambda(T)}{T(\ln T)^{-\beta}}=1
$$
\end{lemma}
\begin{proof}[Proof of Lemma~\ref{lem:ln}]
Fix an $\epsilon\in(0,1)$, then
$$
\Lambda(T)=\int_e^T\frac{\dd s}{(\ln s)^\beta}=\int_e^{T^{1-\epsilon}}\frac{\dd s}{(\ln s)^\beta}
+ \int_{T^{1-\epsilon}}^T\frac{\dd s}{(\ln s)^\beta}
<\ T^{1-\epsilon}+\frac{T}{(\ln T)^\beta (1-\epsilon)^\beta}.
$$    
At the same time, trivially,
$$
\Lambda(T)>\frac{T-e}{(\ln T)^\beta}.
$$
Now the limit of the ratio of the upper and the lower bounds of ${\Lambda(T)}$ can be made arbitrarily close to~$1$, by choosing a small enough $\epsilon$. Hence the statement of the lemma follows.
\end{proof}

The rest of the proof goes along the same lines as that of Theorem~\ref{thm2} 
\begin{itemize}
\item[(1)]
First note that for some  $c>0$ the event  $\left\{\tau_n\ \leq\ cn(\ln{n})^{\beta}\right\}$ occurs finitely often almost surely. Indeed, 
$$
\tilde\Lambda=\Lambda(cn(\ln{n})^{\beta})=
\frac{cn(\ln{n})^{\beta}}{\left[\ln(cn(\ln{n})^{\beta})\right]^{\beta}}(1+o(1))=\frac{(\ln n)^\beta}{(1+o(1))(\ln n)^\beta}cn(1+o(1))=(1+o(1))cn
$$
by Lemma~\ref{lem:ln}, and thus
\begin{align*}
\P(\tau_n\leq\ cn(\ln{n})^{\beta})&=
\P\left(N\left(cn(\ln{n})^{\beta}\right) \ge n\right)= e^{-\tilde\Lambda} \left(    \frac{\tilde\Lambda^n}{n!}+\frac{\tilde\Lambda^{n+1}}{(n+1)!}+\frac{\tilde\Lambda^{n+2}}{(n+2)!}+\dots\right)
    \\ &
= e^{-\tilde\Lambda}\frac{\tilde\Lambda^n}{n!}
\left(1+\frac{\tilde\Lambda}{(n+1)}+\frac{\tilde\Lambda^2}{(n+1)(n+2)}+\dots\right)
\\  &
\le e^{-\tilde\Lambda}\frac{\tilde\Lambda^n}{n!}
\left(1+\frac{\tilde\Lambda}{1!}+\frac{\tilde\Lambda^2}{2!}+\dots\right)
=\frac{\tilde\Lambda^n}{n!}= \frac{\left[(1+o(1))cn\right]^{n}}{n!}
=\mathcal{O}\left(\frac{[(1+o(1)) c]^n n^{n}  }{n^n e^{-n} \sqrt{n} }\right). 
\end{align*}
This quantity is summable as long as $ce<1$, and the statement follows from the Borel-Cantelli lemma.
    
\item[(2)] For any positive $\epsilon$, the event $\bigl\{\tau_n\ \geq\ c^*n(\ln{n})^{\beta}\bigl\}$, where $c^*=1+\epsilon$, occurs finitely often, almost surely. Indeed, 
$$
\bar\Lambda:=\Lambda\left( c^*n(\ln{n})^{\beta}\right)
=\frac{c^*n(\ln{n})^{\beta}}{[\ln(c^*n(\ln{n})^{\beta})]^\beta} (1+o(1))=c^* n (1+o(1))\ge (1+\epsilon/2)n
$$
for large enough $n$ by Lemma~\ref{lem:ln}. Hence, since $\bar\Lambda>n$,
\begin{align*}
   \P\bigl(\tau_n\ \geq\ c^*n(\ln{n})^{\beta}\bigl)&=\P(N\left(c^*n(\ln{n})^\beta\right)\le n)
   = e^{-\bar\Lambda} \Bigl(1+\bar\Lambda+\frac{{\bar\Lambda}^2}{2!}+\dots+\frac{{\bar\Lambda}^n}{n!}\Bigl)\\ &
   \le 
   e^{-\bar\Lambda}\, (n+1)\,\frac{{\bar\Lambda}^n}{n!}
   =e^{-\bar\Lambda}\, (n+1)\,\frac{{\bar\Lambda}^n}{n^n e^{-n}\sqrt{2\pi n}}(1+o(1))
   \\&
=(1+o(1))\sqrt{\frac{n}{2\pi}}\exp\left\{
-n\left[\frac{\bar\Lambda}{n}-1-\ln \frac{\bar\Lambda}{n}\right]
\right\}
\end{align*}
which is summable in $n$, as $c^*=1+\epsilon$ implies that the expression in square brackets is strictly positive (this follows from an easy fact that $c-1-\ln c>0 $ for $c>1$).
Hence, $\left\{\tau_n\ \geq\ c^*n(\ln{n})^{\beta}\right\}$ happens finitely often, almost surely, as stated.

\item[(3)] The event $\left\{\tau_{n+1}-\tau_{n}\ \geq\ 2 (\ln{n})^{1+\beta}\right\}$ occurs finitely often, almost surely.
This holds because, for all sufficiently large $n$, we have $\tau_{n+1}\le c^* (n+1)(\ln (n+1))^\b$, hence for $t\le \tau_{n+1}$
$$
\l(t)\ge \frac{1}{[\ln(c(n+1)(\ln (n+1))^\b)]^\b}=\frac{1+o(1)}{(\ln n)^\b}
$$
and thus
$(\tau_{n+1}-\tau_{n})$ is stochastically smaller than an exponential random variable $\mathcal{E}$ with parameter $\displaystyle\frac{1+o(1)}{(\ln{n})^\beta}$. So for all sufficiently large $n$,
\begin{align}\label{eqnotlarge2}
    \P(\tau_{n+1}-\tau_n\geq\ 2(\ln{n})^{1+\beta})\leq\ \P(\mathcal{E}\geq\ 2(\ln{n})^{1+\beta})=\frac{1}{n^{2-o(1)}}
\end{align}
which is summable in $n$, and we can apply the Borel-Cantelli lemma.

Using previous arguments for horizontal and vertical components, we obtain,
\begin{align*}
\P\bigl(|X_n|\leq 1\bigl)&\leq \frac{C}{\sqrt{n}(\ln{n})^{\beta}}\\
\P\bigl(|Y_n|\leq (\ln{n})^{1+\beta}\bigl)&\leq(\ln{n})^{1+\beta}\ \times\ \frac{C}{\sqrt{n}(\ln{n})^{\beta}}\ =\ \frac{C\ln{n}}{\sqrt{n}}
\end{align*}
Again, similar to Theorem~\ref{thm2}, the event $\bigl\{|X_n|\leq \rho,\ |Y_n|\leq 2(\ln{n})^{1+\beta}\bigl\}$ has to happen infinitely  often almost surely, for $Z(t)$ to be recurrent.

Thus it will follow, that 
\begin{align*}
        \P(Z(t)\text{ visits }[-1,1]^2\text{ for }t\in[\tau_n,\tau_{n+1}])&\leq\P\bigl(|X_n|\leq 1,\ |Y_n|\leq 2(\ln{n})^{1+\beta}\bigl)+f(n)\\&\leq\ \frac{C^2}{n(\ln{n})^{\beta-1}} +f(n)+ g(n),
\end{align*}
where $f$ and $g$ are two summable functions over $n$, similar to Theorem~\ref{thm2}. The RHS is summable over $n$ as long as $\b>2$. Hence $Z(t)$ visits $[-1, 1]^2$ finitely often, almost surely.
\end{itemize}
\end{proof}

\section{Analysis of Model B}
Recall that in Model B, vectors ${\bf f}_n$ are uniformly distributed over the unit sphere in $\R^d$.

Throughout this section, we will again suppose that $\rho=1$ and we will show that the process is not $1-$recurrent. The proof for the general $\rho$ is analogous and is omitted.

Let   $W_n=(X_n,Y_n,*,*,\dots,*)$ be the embedded version of the process $Z(t)=(X(t),Y(t),*,*,\dots,*)$; here $X_n$ and $Y_n$ ($X(t)$ and $Y(t)$ respectively) stand for the process' first two coordinates. We denote the projection of $Z(t)$ ($W_n$ respectively) on the two-dimensional plane by $\hat W(t)=(X(t),Y(t))$ ($\hat W_n=(X_n,Y_n)$ respectively.) See Figure~\ref{fig:CRW}.

\begin{figure}
    \centering
    \includegraphics[scale=0.2]{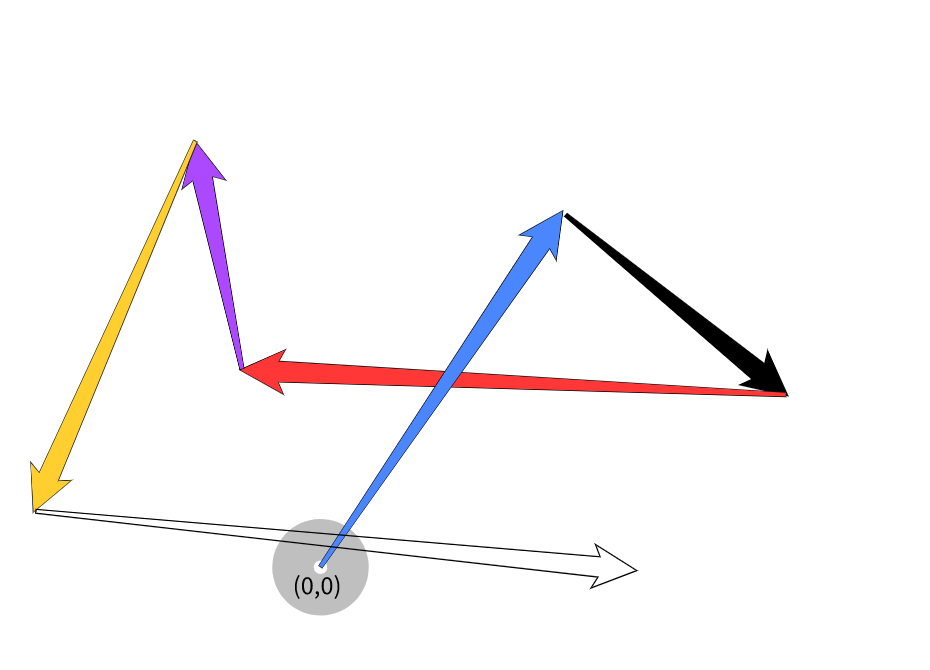}
    \caption{Model B: $\hat W_n$, (the projection of) conservative random walk on $\R^2$}
    \label{fig:CRW}
\end{figure}

The following statement is quite intuitive.
\begin{lemma}\label{lemSat}
Suppose $d\ge 3$, and let ${\bf f}=(f_1,f_2,\dots,f_d)$ be a random vector uniformly distributed on the unit sphere $\mathcal{S}^{d-1}$ in $\R^d$. Then for some $\g>0$ we have
$$
\P\left(\sqrt{f_1^2+f_2^2}\ge \g\right)\ge \frac23.
$$
\end{lemma}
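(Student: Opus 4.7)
The plan is to use the standard Gaussian representation of the uniform measure on $\mathcal{S}^{d-1}$. First I would observe that $\mathbf{f}$ has the same distribution as $(g_1,\dots,g_d)/\sqrt{g_1^2+\dots+g_d^2}$, where $g_1,\dots,g_d$ are i.i.d.\ standard normals. Consequently $f_1^2+f_2^2$ has the distribution of $U/(U+V)$ with $U=g_1^2+g_2^2\sim\chi^2_2$ and $V=g_3^2+\dots+g_d^2\sim\chi^2_{d-2}$ independent, which is the Beta distribution with parameters $(1,(d-2)/2)$. In particular, its cumulative distribution function is explicit,
$$
\P\bigl(f_1^2+f_2^2\le x\bigr)=1-(1-x)^{(d-2)/2},\qquad x\in[0,1].
$$

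Setting $x=\g^2$ gives $\P\bigl(\sqrt{f_1^2+f_2^2}\ge \g\bigr)=(1-\g^2)^{(d-2)/2}$. Requiring this to be at least $2/3$ and solving, I would simply take
$$
\g=\sqrt{1-(2/3)^{2/(d-2)}},
$$
which is strictly positive for every fixed $d\ge 3$, proving the claim.

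The argument is essentially a one-line distributional identity, so there is no real obstacle. A softer alternative, should one wish to avoid invoking the Beta density, is to observe that the set $\{f_1=f_2=0\}$ has zero spherical measure when $d\ge 3$, so $\sqrt{f_1^2+f_2^2}>0$ almost surely and hence $\P(\sqrt{f_1^2+f_2^2}\ge \g)\to 1$ as $\g\downarrow 0$ by continuity of probability; existence of a suitable $\g$ then follows immediately. I would favour the explicit computation, however, because it produces a usable value of $\g$ in terms of $d$ that can be fed directly into whichever downstream estimate is needed for the recurrence/transience analysis of Model B.
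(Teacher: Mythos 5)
Your argument is correct. It uses the same Gaussian representation of the uniform measure on $\mathcal{S}^{d-1}$ as the paper, but then pushes through to the exact distribution: identifying $f_1^2+f_2^2$ as $\mathrm{Beta}(1,(d-2)/2)$ gives the closed form $\P(f_1^2+f_2^2>x)=(1-x)^{(d-2)/2}$, from which the explicit $\gamma=\sqrt{1-(2/3)^{2/(d-2)}}$ follows. The paper's own proof is deliberately softer: it avoids the distributional identity and instead chooses thresholds $a<A$ so that $\P(\max_{i=1,2}|\eta_i|>a)\ge\sqrt{2/3}$ and $\P(\max_{i\ge 3}|\eta_i|<A)\ge\sqrt{2/3}$, then uses independence and the bound $f_1^2+f_2^2\ge a^2/(a^2+(d-2)A^2)$ on the intersection. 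Interestingly, the paper then states your exact computation in a remark immediately after the lemma, noting that ``we do not really need this exact expression.'' So your route is not new to the authors, but it does buy you a clean, usable value of $\gamma$ in terms of $d$, whereas the paper's $\gamma$ depends on non-explicit choices of $a$ and $A$; since only the existence of some $\gamma>0$ is used downstream, both are adequate for the intended application.
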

\begin{rema}
    The statement is trivially true for the case $d=2$ as well.
\end{rema}
\begin{proof}
We will use the following well-known representation (see e.g.~\cite{Blum}, Section 2.5) of ${\bf f}$:
$$
{\bf f}=\left(\frac{\eta_1}{\Vert\eta\Vert},\dots,\frac{\eta_d}{\Vert\eta\Vert}\right)
$$
where $\eta_i$, $i=1,2,\dots$, are i.i.d.\ standard normal and $\Vert\eta\Vert=\sqrt{\eta_1^2+\dots+\eta_d^2}$.
For some large enough $A>0$ we have
$$
\P\left(\max_{i=3,\dots,d} |\eta_i|<A\right)=\left(\Phi(A)-\Phi(-A)\right)^{d-2}\ge \sqrt{\frac 23},
$$
where $\Phi(\cdot)$ is the distribution function of the standard normal random variable. Also,  for some small enough $a\in(0,A)$
$$
\P\left(\max_{i=1,2} |\eta_i|>a\right)\ge \sqrt{\frac 23}.
$$
On the intersection of these two independent events we have
$$
f_1^2+f_2^2=\frac{\eta_1^2+\eta_2^2}{(\eta_1^2+\eta_2^2)+(\eta_3^2+\dots+\eta_d^2)}\ge \frac{a^2}{a^2+(d-2)A^2}=:\g^2.
$$
\end{proof}
\begin{rema}
In fact, one can rigorously compute
\begin{align*}
\P\left(f_1^2+f_2^2\ge \g^2\right)&=
\P\left(\frac{\eta_1^2+\eta_2^2}{\eta_1^2+\eta_2^2+\dots+\eta_d^2}\ge \g^2\right)
=
\P\left(\eta_1^2+\eta_2^2
\ge \frac{\g^2}{1-\g^2}(\eta_3^2++\dots+\eta_d^2)\right)
\\ &
=\P\left(\chi^2(2)
\ge \frac{\g^2}{1-\g^2}\chi^2(d-2)\right)
=\iint_{x\ge \frac{\g^2}{1-\g^2}y\ge 0} \frac{e^{-x/2}}2  \times \frac{y^{d/2-2}e^{-y/2}}{2^{d/2-1} \Gamma(d/2-1)} \dd x \dd y \\ & 
=\int_0^\infty  
e^{-\frac{y}2 \cdot\frac{\g^2 }{2(1-\g^2)}}  \times \frac{y^{d/2-2}e^{-y/2}}{2^{d/2-1} \Gamma(d/2-1)} \dd y
=(1-\g^2)^{d/2-1},
\end{align*}
however, we do not really need this exact expression.
\end{rema}

\begin{lemma}\label{lemCV1}
Suppose $d\ge 2$, and let $R_k=\{(x,y)\in\R^2: k^2\le x^2+y^2\le (k+1)^2\}$ be the ring of radius $k$ and width $1$ centred at the origin.  For some constant $C>0$, possibly depending on $d$ and $\a$,
\begin{align*}
\P\left(\hat W_n^{(1)}\in R_k\right)&\le \frac{Ck}{n^{1+\frac{2\a}{1-\a}}};
\\
\P\left(\hat W_n^{(2)}\in R_k\right)&\le \frac{Ck}{n(\ln{n})^{2\beta}}
\end{align*}
for all large $n$, where $\hat W_t^{(1)}$ is the walk with rate~\eqref{rate1}  and $\hat W_t^{(2)}$ is the walk with rate with rate~\eqref{rate2} respectively.
\end{lemma}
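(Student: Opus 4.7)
The plan is to adapt the strategy of Theorem~\ref{thm2} to Model~B, replacing its coordinate-wise analysis with a genuinely two-dimensional anti-concentration estimate for the projected walk $\hat W_n$. I describe the argument for $\hat W_n^{(1)}$; the case of $\hat W_n^{(2)}$ is handled identically, with $n^{\alpha/(1-\alpha)}$ replaced by $(\ln n)^{\beta}$, mirroring the parallel between Theorems~\ref{thm2} and~\ref{th:sat}.

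First, on the high-probability event $\mathcal{E}_1\cap\mathcal{E}_2$ from~\eqref{eq:EE}, Lemma~\ref{lemlarg} and Hoeffding's inequality~\eqref{LDP} produce at least $\Theta(n)$ indices $k\in[n/2,n-1]$ with $\tau_{k+1}-\tau_k\ge\beta\, n^{\alpha/(1-\alpha)}$, exactly as in the proof of Theorem~\ref{thm2}. Intersecting with the event $\{r_k\ge\gamma\}$, which by Lemma~\ref{lemSat} has probability at least $2/3$ independently across $k$ for $r_k:=\sqrt{f_{k,1}^2+f_{k,2}^2}$, and applying Hoeffding once more, I obtain a ``good'' set $G$ with $|G|\ge c_0 n$ on which
$$
a_k\ :=\ (\tau_{k+1}-\tau_k)\,r_k\ \ge\ b\ :=\ \tfrac12\,\beta\gamma\,n^{\alpha/(1-\alpha)},
$$
outside an event $\mathcal{D}^c$ whose probability is exponentially small in $n$. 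Now condition on the $\sigma$-field $\F$ generated by all $\tau_j$'s and by the coordinates $\{f_{k,j}:j\ge 3\}$ (which in particular determine the $r_k$'s). By the rotational invariance of the uniform law on $\mathcal{S}^{d-1}$ under rotations in the $(x_1,x_2)$-plane, the 2D angles $\Theta_k:=\arg(f_{k,1},f_{k,2})$ are then conditionally independent and uniform on $[0,2\pi)$, so that
$$
\hat W_n\ =\ \sum_{k=0}^{n-1}a_k(\cos\Theta_k,\sin\Theta_k).
$$

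The crux is to show that, on $\mathcal{D}$, $\hat W_n$ admits a conditional density on $\R^2$ bounded by $C/(nb^2)$. I would establish this via Fourier inversion: the conditional characteristic function factors as $\phi_n(\xi)=\prod_k J_0(|\xi|a_k)$, where $J_0$ is the Bessel function of the first kind. Using the quadratic bound $|J_0(x)|\le e^{-cx^2}$ for $|x|\le 2$, the power-law bound $|J_0(x)|\le C_0/\sqrt{x}$ for $|x|$ large, together with the fact that $\sup_{x\ge 1}|J_0(x)|<1$, the product restricted to the $\Theta(n)$ good indices $k\in G$ (with $a_k\ge b$) yields $\int_{\R^2}|\phi_n(\xi)|\dd\xi\ \le\ C/(nb^2)$, and Fourier inversion bounds the density pointwise by the same quantity. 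Integrating over the ring $R_k$, whose area is at most $2\pi k+\pi$, gives
$$
\P(\hat W_n^{(1)}\in R_k\mid \mathcal{D})\ \le\ \frac{Ck}{nb^2}\ =\ \frac{Ck}{n^{1+2\alpha/(1-\alpha)}},
$$
and the exponentially small $\P(\mathcal{D}^c)$ is absorbed into the constant $C$ for $n$ large (since the leading bound is only polynomially small). The case of $\hat W_n^{(2)}$ follows by the same argument with $b$ replaced by $\gamma(\ln n)^\beta$, using the scaling from the first two bullets in the proof of Theorem~\ref{th:sat}.

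The main obstacle lies in the Fourier estimate: one must carefully control the transitional range of $|\xi|$ where $|\xi|a_k$ falls between the thresholds at which the Gaussian and the power-law bounds for $J_0$ are each individually sharp. A cleaner alternative is to invoke a non-asymptotic 2D local central limit theorem for sums of independent circularly symmetric vectors, which delivers the density bound $C/(nb^2)$ directly.
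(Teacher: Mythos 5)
Your proposal is correct and follows the same core strategy as the paper's proof of Lemma~\ref{lemCV1}: exploit the conditional rotational invariance of $\mathbf f_k$ in the $(x_1,x_2)$-plane to write $\hat W_n$ as a sum of independent planar vectors with uniform angles, observe that the conditional characteristic function factorizes into Bessel $J_0$ terms, and then Fourier-invert to bound the density of $\hat W_n$ pointwise by $O(1/(n a^2))$, finally integrating over the ring $R_k$ (area $\pi(2k+1)$) to pick up the extra factor of $k$. Where you diverge is the conditioning: you condition on the full $\sigma$-field of the $\tau_j$'s together with the transverse coordinates $\{f_{k,j}:j\ge3\}$, which makes the amplitudes $a_k=(\tau_{k+1}-\tau_k)\,r_k$ deterministic; the paper conditions only on $\mathcal{E}_1$ and the event $\mathcal{D}_2$ that $\Theta(n)$ projection lengths satisfy $\ell_k\ge\gamma$, keeps the increments $\Delta_k$ random inside the expectation, and replaces your deterministic lower bound $a_k\ge b$ on a good set by a stochastic-domination step: $\Delta_k\succ\tilde\xi\sim\mathrm{Exp}(1/a)$, giving $\E\,|J_0(\|t\|\Delta_k\ell_k)|\le\E\,G(\|t\|\tilde\xi\,\ell)=h(s,\ell)$. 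Your version consequently needs the extra coupling/Hoeffding step to produce $\Theta(n)$ increments exceeding $\beta n^{\alpha/(1-\alpha)}$ (be careful: those increment events are dependent, so this must be routed through the same coupling to independent exponentials used implicitly in Theorem~\ref{th:A1}), but in return the Fourier estimate becomes a product of explicit deterministic factors, which is cleaner. The ``transitional range'' you flag as the main obstacle evaporates if you replace the three separate $J_0$ bounds by the single unified estimate $|J_0(x)|\le(1+x^2)^{-1/4}$ from Claim~\ref{claimJ}: then on the good set $\prod_{k\in G}|J_0(\|t\|a_k)|\le(1+b^2\|t\|^2)^{-|G|/4}$, whose integral over $\R^2$ equals $\frac{2\pi}{b^2}\cdot\frac{1}{|G|/2-1}=O(1/(nb^2))$ with no case split at all, matching the paper's bound $O\bigl(n^{-1-2\alpha/(1-\alpha)}\bigr)$ resp.\ $O\bigl(n^{-1}(\ln n)^{-2\beta}\bigr)$.
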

\begin{proof}
Assume that the event $\mathcal{E}_1$ defined by~\eqref{eq:EE} has occurred.
We can write
$$
\hat W_n=(X_n,Y_n)=\sum_{k=1}^n (\tau_k-\tau_{k-1})\tilde{\mathbf{f}}_k
\quad\text{where}\quad
\tilde{\mathbf{f}}_k=\ell_k\left[\mathbf{e_1}\cos(\phi_k)+\mathbf{e_2}\sin(\phi_k)\right]
$$
and $\phi_k$, $k=1,2,\dots$, are  uniformly distributed on $[-\pi,\pi]$, and $\ell_k$ is a length of projection $\tilde{\mathbf{f}}_k$ of $\mathbf{f}_k$ on the two-dimensional plane. Note that elements of the set $\{\ell_1,\eta_2,\eta_3,\dots,\phi_1,\phi_2,\phi_3,\dots\}$ are all independent. Also, define 
$$
\mathcal{D}_2=\{\text{for at least half of the integers   }i\in[n/2,n]
\text{ we have }\ell_i\ge \g\}
$$
where $\g$ is the constant from Lemma~\ref{lemSat}.
Then $\P(\mathcal{D}_2^c)\le 2e^{-n/36} $ by~\eqref{LDP} and Lemma~\ref{lemSat}.

Let $\tilde W_n$ be the distribution of $\hat W_n\in\R^2$ {\em conditioned} on $\mathcal{E}_1\cap  \mathcal{D}_2$,  and
$$
\varphi_{\tilde W_n}(t)=\E e^{i t\cdot \tilde W_n}=\E \exp\left\{i\sum_{k=1}^n t\cdot \tilde{\mathbf{f}}_{k}(\tau_k-\tau_{k-1})\right\}
$$
be its characteristic function (here $t\cdot \tilde{\mathbf{f}}_k=\ell_k\left(t_1\cos(\phi_k)+t_2\sin(\phi_k)\right)$).
We will use the L\'evy inversion formula, which allows us to compute the density of $\tilde W_n$, provided $|\varphi_{\tilde W_n}(t)|$ is integrable:
\begin{align}\label{eqFT}
f_{\tilde W_n}(x,y)&=\frac1{(2\pi)^2} \iint_{\R^2}
e^{-i( t_1 x+t_2 y)} \varphi_{\tilde W_n}(t)
\dd t_1 \dd t_2 
\le 
\frac1{(2\pi)^2} \iint_{\R^2}
\left|\varphi_{\tilde W_n}(t)\right|
\dd t_1 \dd t_2 .
\end{align}

Let $\Delta_k=\tau_k-\tau_{k-1}$. Since $\phi_1,\dots,\phi_n$ are i.i.d.\ Uniform$[-\pi,\pi]$ and independent of anything, we have
\begin{align*}
\varphi_{\tilde W_n}(t)
 &=   \E \left[ \prod_{k=1}^n \E \left(e^{i\Delta_k\ell_k(t_1\cos\phi_k+t_2\sin\phi_k)} 
\mid \tau_1,\dots,\tau_n;\ell_1,\dots,\ell_n\right)\right]
= \E \left[ \prod_{k=1}^n J_0\left(\Vert t\Vert \Delta_k\ell_k \right)\right]
\\
\Longrightarrow
\\
|\varphi_{\tilde W_n}(t)|&\le 
\E \left[ \prod_{k=1}^n \left|J_0(\Vert t\Vert \Delta_k\ell_k )\right|\right]
\end{align*}
where $\Vert t\Vert=\sqrt{t_1^2+t_2^2}$, and $J_0(x)=\sum_{m=0}^\infty \frac{(-x^2/4)^{m}}{m!^2}$ is Bessel $J_0$ function. Indeed, for any $x\in\R$, setting $\tilde x=x\sqrt{t_1^2+t_2^2}$ and $\b=\arctan \left(\frac{t_2}{t_1}\right)$, we get
\begin{align*}
\E \left[e^{i x (t_1\cos\phi_k+t_2\sin\phi_k)}\right]
&=\frac1{2\pi} \int_0^{2\pi}e^{ix(t_1\cos\phi+t_2\sin\phi)}\dd \phi
=\frac1{2\pi} 
\int_0^{2\pi}e^{i\tilde x(\cos\beta\cos\phi+\sin\beta \sin\phi)}\dd \phi
\\ & = \frac1{2\pi} \int_0^{2\pi}e^{i\tilde x \cos(\phi+\beta)}\dd \phi
 =
\frac1{2\pi} \int_0^{2\pi}e^{i\tilde x \cos(\phi)}\dd \phi
=\frac1{2\pi} \int_0^{2\pi} \cos(\tilde x\cos(\phi))\dd\phi=J_0(\tilde x)
\end{align*}
due to periodicity of $\cos(\cdot)$ and the fact that the function $\sin(\cdot)$ is odd, and formula (9.1.18) from~\cite{AS}.

Let $\xi_k$ be the random variable with the distribution of $\Delta_k$ given $\tau_{k-1}$. Then, recalling that we are on the event $\mathcal{E}_1$, we get
\begin{align*}
\xi_k\succ     \tilde\xi^{(1)} &\text{ for }\hat{W}^{(1)}_n;\\
\xi_k\succ     \tilde\xi^{(2)} &\text{ for }\hat{W}^{(2)}_n
\end{align*}
where
$\tilde\xi^{(1)}$ ($\tilde\xi^{(2)}$ respectively) is an exponential random variable with rate $1/{a_1}$ ($1/a_2$ respectively) with
\begin{align*}
a_1&=\tilde c_1 n^{\frac{\a}{1-\a}}    ,\\
a_2&=\tilde c_2 (\ln{n})^\beta
\end{align*}
for some constants $\tilde c_1>0$ and $\tilde c_2>0$, and ``$\zeta_a\succ \zeta_b$'' denotes that random variable $\zeta_a$ is stochastically larger than random variable $\zeta_b$.
Consequently, setting $a=a_1$, $\tilde\xi=\tilde\xi^{(1)}$ or $a=a_2$, $\tilde\xi=\tilde\xi^{(2)}$ depending on which of the two models we are talking about, and $\mathcal{F}_k=\sigma(\tau_1,\dots,\tau_k)$, we get
\begin{align}\label{eq:h}
&\E\left[ \left|J_0(\Vert t\Vert \Delta_k\ell_k)\right| \mid \F_{k-1},\ell_k=\ell \right]=\E\,|J_0(\Vert t\Vert \xi_k \ell)|
\le   \E\, G(\Vert t\Vert \xi_k \ell)
\le 
\E\, G(\Vert t\Vert \tilde\xi\ell)\\ \nonumber &
=\int_0^\infty
a^{-1} e^{-{y}/{a}} G(\Vert t\Vert y \ell) \dd y
=\int_0^\infty  e^{-u} G(s u) \dd u
=\int_0^\infty  \frac{e^{-u}}{\sqrt[4]{1+s^2 u^2}} \dd u
=:h(s,\ell)
\end{align}
where $$s=a \ell \Vert t\Vert ,$$ since $|J_0(x)|\le G(x)$ by~\eqref{Besbound2}  and the fact that $G(\cdot)$ is a decreasing function and $\xi_k\succ \tilde\xi$.

We will now estimate the function $h(t,\ell)$. 
Since $|J_0(x)|\le 1$, we trivially get $0\le h(s,\ell)\le 1$.
Additionally, for all $s>0$
\begin{align*}
 h(s,\ell)
\le  \int_{0}^\infty  \frac{e^{-u}}{\sqrt{su}} \dd u
= \sqrt{\frac{\pi}{s}}= \sqrt{\frac{\pi}{a \ell \Vert t\Vert}}. 
\end{align*}
Let 
$$
n/2\le j_1<j_2<\dots<j_m\le n
$$
be the indices $i\in[n/2,n]$ for which $\ell_i\ge\g$. 
Since $|J_0(x)|\le 1$, we have from~\eqref{eq:h}
\begin{align*}
\E \left[ \prod_{k=1}^n \left|J_0(\Vert t\Vert \Delta_k\ell_k) \right|\right]
& \le 
\E \left[ \prod_{i=1}^m \left|J_0(\Vert t\Vert \Delta_{j_i}\ell_{j_i}) \right| \right]
=\E \left[ \E\left(
|J_0(\Vert t\Vert \Delta_{j_m}\ell_{j_m})|
\mid \F_{j_m-1},\ell_{j_m}\right)
\prod_{i=1}^{m-1}  \left|J_0(\Vert t\Vert \Delta_{j_i}\ell_{j_i}) \right| \right]
\\
&\le \sqrt{\frac{\pi}{a \g \Vert t\Vert }}\cdot
\E \left[\prod_{i=1}^{m-1}  \left|J_0\left(\Vert t\Vert \Delta_{j_i}\ell_{j_i}\right) \right| \right]
\end{align*}
By iterating this argument for $i=j_{m-1},j_{m-2},\dots,j_1$, we get
$$
|\varphi_{\tilde W_n}(t)|\le \E \left[ \prod_{k=1}^n \left|J_0(\Vert t\Vert \Delta_k\ell_k) \right|\right]
\le \left(\sqrt{\frac{\pi}{a \g \Vert t\Vert }}\right)^{n/4}
$$
(recall that $m\ge n/4$ on $\mathcal{D}_1$).

Consider now two cases.  For $\Vert t\Vert\ge \frac{2\pi}{a\g}$ a part of the inversion formula gives
\begin{align}\label{bound1}
\iint_{\Vert t\Vert>\frac{2\pi}{a\g}}
\left|\varphi_{\tilde W_n}(t) \right|\dd t_1 \dd t_2 
& \le 
\iint_{\Vert t\Vert\ge \frac{2\pi}{a\g}}
\left(\frac{\pi}{\g \Vert t\Vert a }\right)^{n/8}
\dd t_1 \dd t_2 
=
\left(\frac{\pi}{a\g }\right)^2\int_0^{2\pi}\dd\theta
\int_2^\infty 
\frac{r \dd r}{r^{n/8}}
\\ & =
\left.\frac{2\pi^3}{a^2\g^2}\times \frac{r^{2-n/8}}{2-n/8}\right|_2^\infty = o\left(2^{-n/8}\right)
\nonumber
\end{align}
by changing the variables $t_1=\frac{\pi}{a\g} r \cos\theta$,
$t_2=\frac{\pi}{a\g} r \sin\theta$.

On the other hand, for $\Vert t\Vert\le \frac{2\pi}{a\g}$, when $\ell\ge \g$ and thua $\sigma:=a\g\Vert t\Vert\le s$,
\begin{align*}   
h(s,\ell)&= \int_0^\infty\frac{e^{-u}\dd u}{\sqrt[4]{1+s^2u^2}}
\le 
\int_0^\infty\frac{e^{-u}\dd u}{\sqrt[4]{1+\sigma ^2u^2}}
\le 
\int_0^1\frac{e^{-u}\dd u}{\sqrt[4]{1+\sigma^2u^2}}
+\int_1^\infty\frac{e^{-u}\dd u}{\sqrt[4]{1+\sigma^2u^2}}
\\ &
\le
\int_0^1\left(1-\frac{\sigma^2u^2}{50}\right) e^{-u}\dd u
+\int_1^\infty e^{-u}\dd u
\\
&=
-\int_0^1\frac{\sigma^2u^2}{50} e^{-u}\dd u
+\int_{0}^\infty e^{-u}\dd u=1-0.0016... \sigma^2
\le \exp\left\{ -\frac{\sigma^2}{700}\right\}=\exp\left\{ -\frac{\g^2 \Vert t\Vert^2 a^2}{700}\right\},
\end{align*}
since $(1+x^2)^{-1/4}\le 1-x^2/50$ for $0\le x\le 7$, and $\sigma\le 2\pi<7$ by assumption.
By iterating the same argument as before, we get
$$
\varphi_{\tilde W_n}\le h(s,\ell)^{n/4} \le \exp\left\{ -\frac{n \g^2 \Vert t\Vert^2 a^2}{2800}\right\}.
$$
Consequently,
\begin{align}\label{bound2}
\iint_{\Vert t\Vert\le \frac{2\pi}{a\g}}
\left|\varphi_{\tilde W_n}(t) \right|\dd t_1 \dd t_2 
& \le 
\iint_{\Vert t\Vert\le \frac{2\pi}{a\g}}
\exp\left\{ -\frac{n \g^2\Vert t\Vert^2 a^2}{2800}\right\}
\dd t_1 \dd t_2 
\\ & \le 
\frac{1400}{a^2\g^2}\int_0^{2\pi}d\theta
\int_0^\infty 
\exp\left\{ -\frac{nr^2}2\right\}r\dd r
 =
\frac{2800\pi}{n a^2\g^2}\nonumber
\end{align}
by changing the variables $t_1=\frac{10\sqrt{14}}{a\g} r \cos\theta$, $t_2=\frac{10\sqrt{14}}{a\g} r \sin\theta$.

Finally, recalling that $a_1\sim n^\frac{\a}{1-\a}$, for $\hat W_t^{(1)}$ and $a_2\sim (\ln{n})^\beta$, for $\hat W_t^{(2)}$, from~\eqref{eqFT}, \eqref{bound1} and~\eqref{bound2} we obtain that
\begin{align*}
f_{\tilde W_n^{(1)}}(x,y)&\le O\left(\frac{1}{n^{1+\frac{2\a}{1-\a}}}\right);\\
f_{\tilde W_n^{(2)}}(x,y)&\le O\left(\frac{1}{n(\ln{n})^{2\beta}}\right).
\end{align*}
Hence
\begin{align*}
\P(\hat W_n\in R_k)&\le \P(\mathcal{E}_1^c)+
\P(\mathcal{D}_2^c)+
\P(\tilde W_n\in R_k)= 4e^{-n/36}+
\iint_{(x,y)\in R_k} f_{\tilde W_n}(x,y)\dd x \dd y
\\ &
=2\pi k\times
\begin{cases}
O\left(\frac{1}{n^{1+\frac{2\a}{1-\a}}}\right),
&\text{for }W_n^{(1)};\\
O\left(\frac{1}{n(\ln{n})^{2\beta}}\right),   
&\text{for }W_n^{(2)}
\end{cases}  
\end{align*}
since the area of $R_k$ is $\pi(2k+1)$.
\end{proof}

\begin{lemma}\label{lemCV2}
Suppose $d\ge 2$, and $\hat W_n=r>1$. Then 
$$
\P\left(\Vert\hat W(t)\Vert\le 1 \text{ for some }t\in[\tau_n,\tau_{n+1}\right)\le \frac{1}{2r}.
$$
\end{lemma}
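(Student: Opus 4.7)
The plan is to reduce this to a purely two-dimensional geometric estimate based on the direction of motion of $\hat W$ during the interval $[\tau_n,\tau_{n+1}]$. During this interval the walk moves in a straight line $\hat W(t)=\hat W_n+(t-\tau_n)\tilde{\mathbf{f}}_n$, where, as in Lemma~\ref{lemCV1}, I would write $\tilde{\mathbf{f}}_n=\ell_n(\cos\phi_n,\sin\phi_n)$. By the rotational invariance of the uniform distribution on $\mathcal{S}^{d-1}$ under rotations acting on the first two coordinates, $\phi_n$ is uniform on $[-\pi,\pi]$ and independent of $\ell_n$, of the points $\tau_k$, and of the previous vectors $\mathbf{f}_0,\dots,\mathbf{f}_{n-1}$; in particular it is independent of $\hat W_n$.

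Now I would condition on $\hat W_n$ with $\Vert\hat W_n\Vert=r$ and rotate coordinates so that $\hat W_n=(r,0)$; this rotation does not affect the distribution of $\phi_n$. Parametrising the segment by $s=(t-\tau_n)\ell_n\ge 0$, the squared distance to the origin is
$$
d(s)^2=(r+s\cos\phi_n)^2+(s\sin\phi_n)^2=r^2+2rs\cos\phi_n+s^2.
$$
If $\cos\phi_n\ge 0$, then $d$ is non-decreasing from $r>1$, so $\hat W$ never enters the unit disk. If $\cos\phi_n<0$, the minimum of $d(s)$ over $s\ge 0$ equals $r|\sin\phi_n|$ (attained at $s^{*}=-r\cos\phi_n$), and regardless of whether the segment actually reaches $s^{*}$, one has $d(s)\ge r|\sin\phi_n|$ throughout. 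A necessary condition for the segment to hit the closed unit disk is therefore $\cos\phi_n<0$ and $r|\sin\phi_n|\le 1$.

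Since $\phi_n$ is uniform on $[-\pi,\pi]$, the set $\{\phi:\cos\phi<0,\ |\sin\phi|\le 1/r\}$ consists of two arcs near $\pm\pi$ of total length $2\arcsin(1/r)$, so
$$
\P\bigl(\Vert\hat W(t)\Vert\le 1\text{ for some }t\in[\tau_n,\tau_{n+1}]\bigm|\hat W_n\bigr)\le \frac{2\arcsin(1/r)}{2\pi}=\frac{\arcsin(1/r)}{\pi}.
$$
Finally I would invoke the elementary inequality $\arcsin(x)\le \pi x/2$ for $x\in[0,1]$ (the function $\arcsin(x)-\pi x/2$ vanishes at $0$ and $1$ and is concave-then-convex with a single interior critical point, so is non-positive throughout) applied to $x=1/r$, yielding the claimed bound $\frac{1}{2r}$.

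There is no serious obstacle here; the argument is essentially one picture. The only points that require care are justifying that $\phi_n$ is genuinely uniform and independent of $\hat W_n$ — which is immediate from the rotational symmetry already exploited in Lemma~\ref{lemCV1} — and noticing that one must use the inequality $\arcsin(x)\le \pi x/2$ rather than the cruder $\arcsin(x)\le \pi/2$, since the factor $\pi/2$ rather than $1/2$ would fail to produce the constant $1/(2r)$ on the right-hand side.
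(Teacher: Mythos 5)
Your proof is correct and takes essentially the same route as the paper's: bound the hitting probability by the probability that the infinite ray from $\hat W_n$ in the (rotationally symmetric, uniform) planar direction meets the unit disk, compute that this is $\arcsin(1/r)/\pi$, and finish with $\arcsin(x)\le \pi x/2$. Your version spells out the geometry analytically (minimising $d(s)^2$ along the ray) where the paper appeals to a picture, and your indexing $\tilde{\mathbf f}_n$ for the direction on $[\tau_n,\tau_{n+1}]$ is in fact the correct one (the paper writes $\tilde{\mathbf f}_{n+1}$, a harmless typo); one small nit is that $\arcsin(x)-\pi x/2$ is simply convex on $[0,1]$ with equal endpoint values, not ``concave-then-convex,'' but the inequality and conclusion stand.
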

\begin{proof}
First, note that $\hat W_n$ lies outside of the unit circle on $\R^2$. The projection $\tilde{\bf f}_{n+1}$ of ${\bf f}_{n+1}$ on the first two coordinates plane has an angle $\phi$ uniformly distributed over $[0,2\pi]$. The probability in the statement of the lemma is monotone increasing in $\tau_{n+1}$, hence it is bounded above by
$$
\P(\text{the infinite ray from }\hat W_n\text{ in the direction  }\tilde{\bf f}_{n+1}\text{ passes through the unit circle})=\frac{2\arcsin(1/r)}{2\pi}
$$
(see Figure~\ref{fig:ray}.) Finally, $\arcsin(x)\le \frac{\pi x}{2}$ for $0\le x\le 1$.
\end{proof}

\begin{figure}
\begin{center}
\setlength{\unitlength}{1mm}
\begin{picture}(100,16)
\put(20,8){\circle{14}}
\put(20,8){\color{blue}\line(1,3){2.2}}
\put(20,8){\color{blue}\line(1,-3){2.2}}
\put(42.5,8){\line(-3,1){30}}
\put(42.5,8){\line(-3,-1){30}}
\put(42.5,8){\color{blue}\line(-1,0){22.5}}
\put(42.5,8){\color{red}\vector(-4,-1){40}}
\put(41,13){$\arcsin(1/r)$}
\put(40,13){\vector(-1,-1){4}}
\end{picture}   
\end{center}
    \caption{A ray from $\hat W_n$ passes through the unit circle}
    \label{fig:ray}
\end{figure}
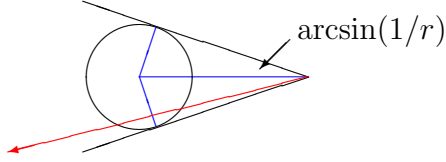

\begin{thm}\label{thm:CV}
Let $d\ge 2$ and the walk $Z(t)$ has either the rate~\eqref{rate1} with $\a\in(0,1)$ or  rate~\eqref{rate2} with $\beta>2$. Then $Z(t)$ is transient a.s.
\end{thm}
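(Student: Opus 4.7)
The plan is to bound the per-interval event
$$
\mathcal{A}_n=\{Z(t)\in[-1,1]^d\text{ for some }t\in(\tau_n,\tau_{n+1}]\}
$$
by a summable sequence and conclude via Borel--Cantelli that $\mathcal{A}_n$ occurs only finitely often a.s., whence $\Vert Z(t)\Vert\to\infty$. Because $Z(t)\in[-1,1]^d$ forces the projection onto the first two coordinates to satisfy $\Vert\hat W(t)\Vert\le\sqrt{2}$, it suffices to control the probability that $\hat W(\cdot)$ enters the closed disc of radius $\sqrt{2}$ during $(\tau_n,\tau_{n+1}]$. The approach is to splice together the two preceding lemmas: Lemma~\ref{lemCV1} for the likely location of $\hat W_n$, and Lemma~\ref{lemCV2} for the geometric obstruction that distance from the origin imposes on a straight-line segment.

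Introduce a truncation on the interarrival time: set $L_n=n^{\a/(1-\a)+\eps}$ in the power-law case (for a small $\eps>0$ to be chosen) and $L_n=2(\ln n)^{1+\b}$ in the logarithmic case. By Lemma~\ref{lem:Y} and by~\eqref{eqnotlarge2} respectively, $\P(\tau_{n+1}-\tau_n>L_n)$ is summable. Since the projected step $\tilde{\mathbf{f}}_k$ has Euclidean length at most $1$, on the complementary event $\hat W$ moves by at most $L_n$ during $(\tau_n,\tau_{n+1}]$, so $\mathcal{A}_n$ can occur only if $\Vert\hat W_n\Vert\le L_n+\sqrt{2}$. Decomposing over the ring containing $\hat W_n$,
$$
\P(\mathcal{A}_n)\le \P(\tau_{n+1}-\tau_n>L_n) + \sum_{k=0}^{\lceil L_n+\sqrt{2}\,\rceil} \P(\hat W_n\in R_k)\, q_k,
$$
where $q_k=\P(\mathcal{A}_n\mid\hat W_n\in R_k)$. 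A straightforward adaptation of Lemma~\ref{lemCV2}, replacing the unit target by the disc of radius $\sqrt{2}$, gives $q_k\le C/k$ for $k\ge 2\sqrt{2}$, while trivially $q_k\le 1$ for small $k$.

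Plugging in the ring estimate from Lemma~\ref{lemCV1}, the finitely many small-$k$ terms contribute $O(1/(a_n^2 n))$, while for $k\ge 2\sqrt{2}$ the product $\P(\hat W_n\in R_k)\cdot q_k$ is $O(1/(a_n^2 n))$ uniformly in $k$, so the tail contributes $O(L_n/(a_n^2 n))$. Here $a_n=n^{\a/(1-\a)}$ in the power-law case and $a_n=(\ln n)^\b$ in the logarithmic case, matching the denominators in Lemma~\ref{lemCV1}. In the power-law case $L_n/(a_n^2 n)=n^{\eps-1/(1-\a)}$, which is summable provided $\eps<\a/(1-\a)$, an inequality one can satisfy for every $\a>0$. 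In the logarithmic case $L_n/(a_n^2 n)=1/(n(\ln n)^{\b-1})$, which is summable precisely when $\b>2$, exactly the standing hypothesis.

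The main technical delicacy is choosing $L_n$ so that the interarrival tail $\P(\tau_{n+1}-\tau_n>L_n)$ and the aggregate ring probability $L_n/(a_n^2 n)$ are simultaneously summable; the logarithmic threshold $\b>2$ is precisely where this balance degrades, which explains why the hypothesis cannot be relaxed by the present method. Once the two summability checks pass, Borel--Cantelli finishes the proof, and the generalization to arbitrary $\rho$ and arbitrary $d\ge 2$ requires only replacing $\sqrt{2}$ by $\rho\sqrt{d}$ throughout.
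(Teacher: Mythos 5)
Your proposal is correct and follows essentially the same route as the paper's proof: both decompose $\P(\mathcal{A}_n)$ over the rings $R_k$, combine Lemma~\ref{lemCV1} for $\P(\hat W_n\in R_k)$ with the $O(1/k)$ bound of Lemma~\ref{lemCV2}, truncate the ring sum at the same scale $L_n$ controlled by the interarrival-time tails from Lemma~\ref{lem:Y} and~\eqref{eqnotlarge2}, and conclude via Borel--Cantelli, reaching the same thresholds $\eps<\a/(1-\a)$ and $\b>2$. Your use of the radius-$\sqrt2$ disc for the two-dimensional projection is, if anything, a touch more careful than the paper's own statement of $A_n$.
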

\begin{proof}
As mentioned before, we will only show that the walk is not $\rho-$recurrent for $\rho=1$. To do that, it will suffice  that a.s.\ there will be only finitely many $n$ such that the event
\begin{align*}
A_n&=\left\{\Vert \hat{W}(t)\Vert\le 1\text{ for some }t\in [\tau_n,\tau_{n+1}]\right\}    
\end{align*}
occurs. Indeed, fix a positive integer $n$. At this time, 
$\hat W_n\in R_k$ for some $k\in \{0,1,2,\dots\}$.
According to Lemma~\ref{lemCV2},
$$
\P(A_n\mid \hat W_n\in R_k)\le \begin{cases}
    1,&\text{if }k=0;\\
    \frac1{2k}, &\text{if }k\ge 1.
\end{cases}
$$
Fix some very small $\eps>0$. Then
\begin{align*}
\P(A_n)&\le \sum_{k=1}^{n^{\frac{\a}{1-\a}+\eps}}
\P\left(A_n\mid \hat W_n^{(1)}\in R_k\right)\P\left( \hat W_n^{(1)}\in R_k\right)+
\P\left(A_n\mid \Vert \hat W_n^{(1)}\Vert\ge n^{\frac{\a}{1-\a}+\eps}\right)
\\
&\le \frac{C n^{ \frac{\a}{1-\a}+\eps  }}{n^{ 1+\frac{2\a}{1-\a}}}+ 
\P\left(\tau_{n+1}-\tau_n\ge n^{\frac{\a}{1-\a}+\eps}-1\right)
\le \frac{C}{n^{ 1+\frac{\a}{1-\a}-\eps  }}+ 3 e^{-c_1^* n^\eps}
\end{align*}
by~\eqref{eqnotlarge} and Lemma~\ref{lemCV1}.
Hence $\sum_n \P(A_n)<\infty$ and the result follows from the Borel-Cantelli lemma.

Similarly, in the other case
\begin{align*}
\P(A_n)&\le \sum_{k=1}^{2(\ln{n})^{1+\beta}}
\P(A_n, \hat W_n^{(2)}\in R_k)+
\P\left(A_n\mid \Vert \hat W_n^{(2)}\Vert\ge {2(\ln{n})^{1+\beta}}\right)
\\
&\le \frac{ 2C(\ln{n})^{1+\beta}  }{n(\ln{n})^{2\beta}}+ 
\P\left(\tau_{n+1}-\tau_n\ge 2(\ln{n})^{1+\beta}-1\right)
\le \frac{2C}{n (\ln{n})^{\beta-1}  }+ \frac{1}{n^2}
\end{align*}
for all sufficiently large $n$ by~\eqref{eqnotlarge2} and~Lemma~\ref{lemCV1}.
Hence $\sum_n \P(A_n)<\infty$ for $\beta>2$, and the result again follows  from the Borel-Cantelli lemma.
\end{proof}

\section*{Appendix: properties of Bessel function $J_0$}
Let
$$
J_0(x)=\sum_{m=0}^\infty \frac{(-1)^m\,(x/2)^{2m}}{(m!)^2}
$$
be the Bessel function of the first kind. The following statement might be known, but we could not find it in the literature.
\begin{claim}\label{claimJ}
 For all $x\ge 0$
    \begin{align}\label{Besbound2}
    |J_0(x)|\le \frac1{\sqrt[4]{1+x^2}}=:G(x).
\end{align}
\end{claim}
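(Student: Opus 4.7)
The plan is to split the range $x\ge 0$ into the two subintervals $[0,2]$ and $[2,\infty)$ and prove the bound by different techniques on each piece, gluing at the common endpoint.

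On the tail $[2,\infty)$ I would use a Sturm--Liouville amplitude argument. The Liouville substitution $u(x):=\sqrt{x}\,J_0(x)$ converts Bessel's equation into the Schr\"odinger-type form
\[
u''+q(x)u=0,\qquad q(x)=1+\tfrac{1}{4x^2},
\]
in which $q$ is positive and strictly decreasing.  The natural energy
\[
M^2(x):=u(x)^2+\frac{u'(x)^2}{q(x)}
\]
has derivative $(M^2)'(x)=-u'(x)^2\,q'(x)/q(x)^2\ge 0$, so $M^2$ is non-decreasing on $(0,\infty)$. The classical first-order asymptotics $J_0(x)\sim\sqrt{2/(\pi x)}\cos(x-\pi/4)$ and $J_0'(x)\sim-\sqrt{2/(\pi x)}\sin(x-\pi/4)$ give $M^2(x)\to 2/\pi$ as $x\to\infty$, so $M^2(x)\le 2/\pi$ and in particular $xJ_0(x)^2=u(x)^2\le 2/\pi$ for every $x>0$. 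For $x\ge 2$ the elementary inequality $(\pi^2-4)x^2\ge 4$ rearranges to $2/(\pi x)\le(1+x^2)^{-1/2}$, giving $J_0(x)^2\le G(x)^2$ as required.

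On the near region $[0,2]$ the bound $2/(\pi x)$ is too weak near the origin, so I would use the Taylor series of $J_0$ instead. Since the ratio of consecutive terms is $(x/2)^2/(k+1)^2\le 1$ throughout $[0,2]$ (for $k\ge 0$), the series is alternating with decreasing terms, and truncation gives
\[
0\le 1-\tfrac{x^2}{4}\le J_0(x)\le 1-\tfrac{x^2}{4}+\tfrac{x^4}{64}=\Bigl(1-\tfrac{x^2}{8}\Bigr)^2.
\]
Squaring, it suffices to check the purely algebraic inequality $h(x):=(1-x^2/8)^8(1+x^2)\le 1$ on $[0,2]$. A one-line logarithmic-derivative computation yields
\[
\frac{h'(x)}{h(x)}=-\frac{9x^3}{4(1-x^2/8)(1+x^2)}\le 0\qquad\text{on }[0,2\sqrt{2}),
\]
so $h$ is non-increasing and $h(0)=1$, which closes the second case.

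The two estimates meet at $x=2$ and together cover all $x\ge 0$. The only mildly delicate step is justifying $\lim_{x\to\infty}M^2(x)=2/\pi$, which relies on the standard leading-order asymptotic behaviour of $J_0$ and $J_0'$; the remaining arguments reduce to short calculus computations.
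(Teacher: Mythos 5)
Your proof is correct, and for the tail regime it takes a genuinely different route from the paper. The paper handles $x\ge 1.13$ by citing Krasikov's sharp uniform bound $J_0^2(x)\le \frac{4(4x^2-5)}{\pi((4x^2-3)^{3/2}-3)}$; you instead prove the cruder but sufficient inequality $xJ_0(x)^2\le 2/\pi$ from first principles via the Liouville substitution $u=\sqrt{x}\,J_0$, the monotone energy $M^2=u^2+u'^2/q$ with $q=1+\tfrac{1}{4x^2}$, and the classical leading-order asymptotics of $J_0$ and $J_0'$. This is more self-contained (no external uniform bound needed) at the modest price of invoking the standard large-$x$ expansion, and the arithmetic $(\pi^2-4)x^2\ge 4$ for $x\ge 2$ correctly converts $2/(\pi x)\le(1+x^2)^{-1/2}$. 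On $[0,2]$ the two proofs are close in spirit but differ in detail: the paper expands $J_0^2$ directly (AS 9.1.14) and compares two explicit polynomials, while you bracket $J_0$ itself by the alternating-series test, $0\le 1-\tfrac{x^2}{4}\le J_0(x)\le(1-\tfrac{x^2}{8})^2$, and then reduce to $h(x)=(1-\tfrac{x^2}{8})^8(1+x^2)\le 1$ by the logarithmic-derivative computation $h'/h=-\frac{9x^3}{4(1-x^2/8)(1+x^2)}\le 0$; I checked this derivative and the alternating-series bounds (including the borderline $x=2$, where the ratio of the first two terms equals $1$, which the alternating-series test still tolerates), and they are all correct. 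Both halves glue correctly at $x=2$, so the argument is sound.
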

\begin{proof}
From Theorem~1 and formula (1) in~\cite{Krasikov} we get (with $\nu=0$, $\mu=3$)
\begin{align*}
    J_0^2(x)&\le \frac{4(4x^2-5)}{\pi( (4x^2-3)^{3/2}-3  )},\quad x\ge 1.13.
\end{align*}
This inequality also implies that
$$
|J_0(x)|\le \frac1{\sqrt[4]{1+x^2}},\quad x\ge 1.13.
$$
At the same time for $0\le x\le 2$ (using \cite{AS} 9.1.14)
$$
J_0^2(x)=\sum_{k=0}^\infty (-1)^k\frac{(2k)!}{(k!)^4} \left(\frac{x}{2}\right)^{2k}
\le 1 - 2\left(\frac{x}{2}\right)^2 + \frac32 \left(\frac{x}{2}\right)^4 - \frac59 \left(\frac{x}{2}\right)^6 + \frac{35}{288}\left(\frac{x}{2}\right)^8
$$
and at the same time
$$
\frac{1}{\sqrt{1+x^2}}\ge 
1 - 2\left(\frac{x}{2}\right)^2 + \frac32 \left(\frac{x}{2}\right)^4 - \frac59 \left(\frac{x}{2}\right)^6 + \frac{35}{288}\left(\frac{x}{2}\right)^8
$$
yielding~\ref{Besbound2}.
\end{proof}
Observe as well that $0\le G(x)\le 1/\sqrt{x}$ and that $G$ is a decreasing function for $x\ge 0$; see Figure ~\ref{fig:Bes}
\begin{figure}
    \centering
    \includegraphics[scale=0.2]{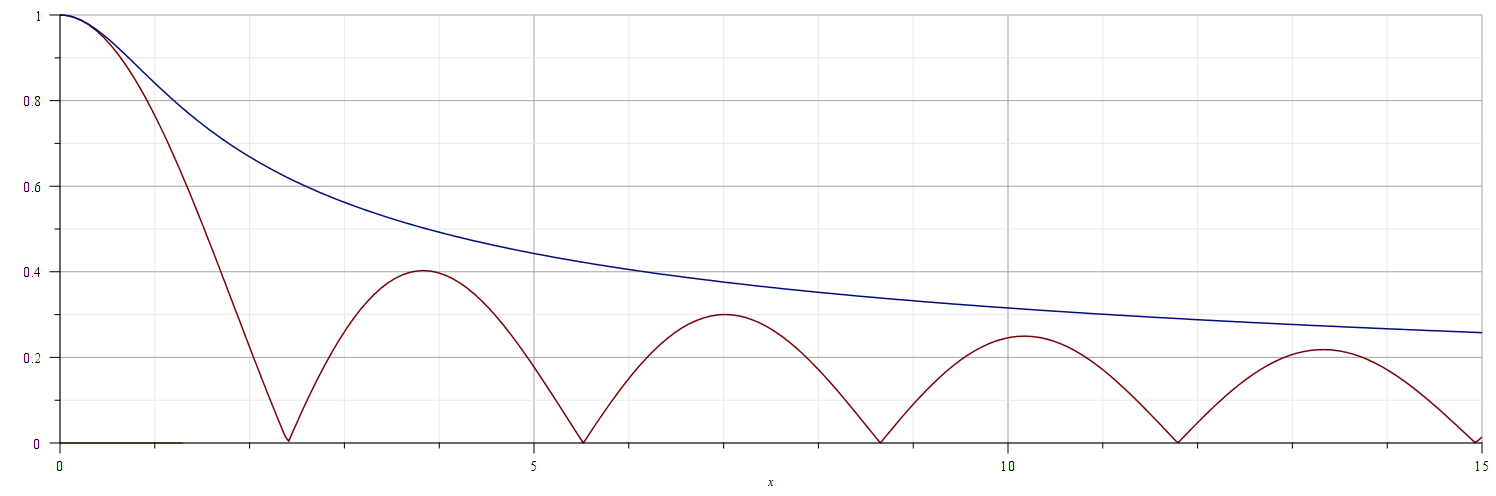}
    \caption{$|J_0(x)|$ (red) and its upper bound  $G(x)$ (blue).}
    \label{fig:Bes}
\end{figure}

\section*{Acknowledgement}
The research is supported by Swedish Research Council grant VR 2014-5157 and Crafoord Foundation grant 20190667.

\end{document}